\newtheorem{thm}{Theorem}[section]
\newtheorem{prop}{Proposition}[section]
\newtheorem{cor}{Corollary}[section]
\newtheorem{rmq}{Remark}[section]
\newcommand{\R}{\mathbb{R}}
\numberwithin{equation}{section}
\providecommand{\sm}{\setminus}
\providecommand{\R}{\mathbb{R}}
\providecommand{\skp}[2]{\langle#1,#2\rangle}  
\providecommand{\bigskp}[2]{\Big\langle#1,#2\Big\rangle}
\newcommand{\N}{\mathbb{N}}
\newcommand{\Z}{\mathbb{Z}}
\newcommand{\Hy}{\mathbb{H}}
\DeclareMathOperator{\Real}{Re}
\DeclareMathOperator{\supp}{supp}
\newcommand{\stkout}[1]{\ifmmode\text{\sout{\ensuremath{#1}}}\else\sout{#1}\fi}
\newcounter{exercice}
\begin{document}

\title[Helmholtz equations in the hyperbolic space]{On Helmholtz equations and counterexamples to Strichartz
estimates in hyperbolic space}

\author{Jean-Baptiste Casteras}
\address{Jean-Baptiste Casteras
\newline \indent D\'epartement de Math\'ematiques, Universit\'e Libre de Bruxelles,
\newline \indent CP 214, Boulevard du triomphe, B-1050 Bruxelles, Belgium,
\newline \indent and INRIA- team MEPHYSTO.}
\email{jeanbaptiste.casteras@gmail.com}
\author{Rainer Mandel}
\address{Rainer Mandel
\newline \indent Karlsruhe Institute of Technology
\newline \indent  Institute for Analysis
\newline \indent  Englerstrasse 2, D-76131 Karlsruhe, Germany.}
\email{Rainer.Mandel@kit.edu }

\maketitle

\allowdisplaybreaks

\section*{Abstract}

In this paper, we study nonlinear Helmholtz equations 
 \begin{equation} \tag{NLH}
\label{NLHabs}
   -\Delta_{\Hy^N} u - \frac{(N-1)^2}{4} u -\lambda^2 u = \Gamma|u|^{p-2}u  
   \quad\text{in }\Hy^N, \;N\geq 2
\end{equation}
 where $\Delta_{\Hy^N}$ denotes the Laplace-Beltrami operator in the hyperbolic space $\Hy^N$ and
 $\Gamma\in L^\infty(\Hy^N)$ is chosen suitably. Using fixed point and variational
 techniques, we find nontrivial solutions to~\eqref{NLHabs} for all $\lambda>0$ and $p>2$.
 The oscillatory behaviour and decay rates of radial solutions is analyzed, 
 with possible extensions to Cartan-Hadamard manifolds and Damek-Ricci spaces.  
 Our results rely on a new Limiting Absorption Principle for the Helmholtz operator in $\Hy^N$. As a
 byproduct, we obtain simple counterexamples to certain Strichartz estimates.

\section{Introduction}

 In this paper we are interested in nontrivial solutions of the Nonlinear Helmholtz Equation (NLH)  
 \begin{equation} \label{eq:NLH}
   -\Delta_{\Hy^N} u - \frac{(N-1)^2}{4} u -\lambda^2 u = \Gamma|u|^{p-2}u  
   \quad\text{in }\Hy^N
\end{equation}
 where $\Delta_{\Hy^N}$ denotes the Laplace-Beltrami operator in hyperbolic space $\Hy^N,N\geq 2$ and
 $\Gamma\in L^\infty(\Hy^N)$. As in the Euclidean setting, linear and nonlinear Helmholtz equations
 arise from a standing wave ansatz for the corresponding Schr\"odinger or wave equations that have attracted much
 interest in the last
 years~\cite{Anker,AnkPi_WaveKG,AnPiVa_WaveEquation,BanCarSta_Scattering,BaDu1,BaDu,BGH,Io,MetTay_NLWaves,Tat_Strichartz},
 especially concerning Strichartz estimates. In order to motivate our first result on the failure of
 Strichartz estimates in hyperbolic space and to provide the link to Helmholtz equations, let us first 
 review the situation in $\R^N$.
 
 \medskip
 
 For the homogeneous Schr\"odinger equation, it was Strichartz himself who proved the (global) 
 Strichartz estimate
 \begin{equation}\label{eq:Strichartz}
   \begin{cases}
     ~\; i\partial_t\psi - \Delta \psi = 0 \quad \text{in }\R^N,\qquad \psi(0)=\psi_0 \\
     ~\; \|\psi\|_{L^p(\R;L^q(\R^N))} \leq C\|\psi_0\|_{L^r(\R^N)},
   \end{cases}
 \end{equation}
 for $r=2, p=q= 2(N+2)/N$, see Corollary~1~\cite{Stri_Restrictions}. The proof is based on Fourier
 restriction theory for paraboloids, which in turn relies on the Stein-Tomas theorem~\cite{Tom_A_restriction}.
 Since then, many generalizations of such estimates to more general $r,p,q$ and other dispersive PDEs have
 been found. The topic being quite vast and intensively studied until today, we do not make any attempt to
 present a comprehensive list of related results. For a detailed treatment of the Schr\"odinger equation, we refer
 to Cazenave's book~\cite{Caz_SLSchroedinger}.  Let us only mention that the scaling invariance 
 of the Schr\" odinger equation shows that in $\R^N$ the estimate~\eqref{eq:Strichartz} can only hold if
 $2/p+N/q=N/r$. 
 Homogeneous Strichartz estimates~\eqref{eq:Strichartz} are known to hold for certain ranges of exponents
 $p,q$ with $r\in (1,2]$, but, up to the authors' knowledge, nothing is known for $r>2$. For
 $r>2N/(N-1)$, it follows from the theory of Helmholtz equations that no dispersive estimate and
 especially none of the above estimates (except for $p=\infty$) can hold.
 Indeed, the method of stationary phase shows that certain solutions to the Helmholtz
 equation $-\Delta \psi_0-\omega\psi_0=0$ for $\omega>0$, namely Herglotz waves given by a sufficiently smooth density
 over the sphere, decays exactly like $|x|^{(1-N)/2}$ as $|x|\to\infty$ (Theorem~1a \cite{Kato_growth},
 Proposition~1 \cite{Man_uncountably}).
 In particular, $\psi(x,t):=e^{i\omega t}\psi_0(x)$ is a solution of the NLS with
 initial datum in $L^r(\R^N)$ precisely for $r>2N/(N-1)$ that does not disperse as $t\to\infty$. We believe
 that it is an interesting open question, whether or not Strichartz estimates hold for initial data $\psi_0\in
 L^r(\R^N)$ with $2<r\leq 2N/(N-1)$.
 
 \medskip
 
 In hyperbolic space, homogeneous Strichartz estimates of the form
 \begin{equation}\label{eq:Strichartzhyp}
   \begin{cases}
     ~\; i\partial_t\psi - \Delta_{\Hy^N} \psi = 0 \quad \text{in }\Hy^N,\qquad \psi(0)=\psi_0 \\
     ~\; \|\psi\|_{L^p(\R;L^q(\Hy^N))} \leq C\|\psi_0\|_{L^r(\Hy^N)}
     \end{cases}
 \end{equation}
 hold for $r=2$ and $p\in [2,\infty),q\in [2,\infty]$ with
 $\frac{2}{p}+\frac{N}{q}\geq \frac{N}{r}$, see Theorem~3.6~\cite{Anker}. In particular, restricting the
 attention to the classical case $r=2$, one sees that Strichartz estimates hold for more exponents than in the
 Euclidean setting. 
 Again, nothing seems to be known for $r>2$.  Given the above considerations in $\R^N$, one
 way of disproving the validity of Strichartz estimates is to determine the decay rate of solutions of
 the linear homogeneous Helmholtz equation in $\Hy^N$. Since we will prove that these solutions lie in
 $L^r(\Hy^N)$ for all $r\in (2,\infty]$, we infer the following.  
 
 \begin{thm}\label{thm:NoStrichartz}
   Let $p\in [1,\infty),q,r\in [1,\infty]$. Then the homogeneous Strichartz estimate for the
   Schr\"odinger equation in $\Hy^N$~\eqref{eq:Strichartzhyp} can only hold provided $1\leq r\leq 2$.
 \end{thm}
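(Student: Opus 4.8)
The plan is to show that for every $r>2$ the estimate~\eqref{eq:Strichartzhyp} fails, by the same stationary‑wave mechanism that rules out Strichartz estimates for $r>2N/(N-1)$ in $\R^N$. The contradiction will come from producing a nondispersive solution of the free Schr\"odinger equation on $\Hy^N$ whose initial datum lies in $L^r(\Hy^N)$.

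First I would fix $\lambda>0$ and take $\psi_0\not\equiv 0$ a solution of the linear homogeneous Helmholtz equation
\begin{equation*}
  -\Delta_{\Hy^N}\psi_0 - \frac{(N-1)^2}{4}\psi_0 - \lambda^2\psi_0 = 0\qquad\text{in }\Hy^N.
\end{equation*}
The substantive input, to be established later in the paper via the new Limiting Absorption Principle together with the asymptotic (oscillation and decay) analysis of radial, Herglotz‑type solutions, is that $\psi_0$ may be chosen so that $\psi_0\in L^r(\Hy^N)$ for \emph{all} $r\in(2,\infty]$ simultaneously. I would then set $\mu:=\frac{(N-1)^2}{4}+\lambda^2>0$ and define $\psi(x,t):=e^{i\mu t}\psi_0(x)$. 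Using $\Delta_{\Hy^N}\psi_0=-\mu\psi_0$, a one‑line computation gives $i\partial_t\psi=-\mu\psi$ and $\Delta_{\Hy^N}\psi=-\mu\psi$, hence $i\partial_t\psi-\Delta_{\Hy^N}\psi=-\mu\psi-(-\mu\psi)=0$ in $\Hy^N$ with $\psi(0)=\psi_0$; so $\psi$ is an admissible solution in~\eqref{eq:Strichartzhyp}, in fact the one produced by the Schr\"odinger propagator applied to $\psi_0$.

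Now suppose~\eqref{eq:Strichartzhyp} held for some $p\in[1,\infty)$ and some $q,r\in[1,\infty]$ with $r>2$. Since $|e^{i\mu t}|\equiv 1$, evaluating the left‑hand side at this $\psi$ yields
\begin{equation*}
  \|\psi\|_{L^p(\R;L^q(\Hy^N))}=\|\psi_0\|_{L^q(\Hy^N)}\,\Big(\int_\R 1\,dt\Big)^{1/p}=+\infty,
\end{equation*}
because $p<\infty$ and $\|\psi_0\|_{L^q(\Hy^N)}\in(0,\infty]$ (the value is positive as $\psi_0\not\equiv0$, and if $\psi_0\notin L^q(\Hy^N)$ the left‑hand side is a fortiori infinite). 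On the other hand the right‑hand side $C\|\psi_0\|_{L^r(\Hy^N)}$ is finite by the choice of $\psi_0$. This contradiction shows that~\eqref{eq:Strichartzhyp} cannot hold for any $r>2$, which is exactly the assertion.

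The only nontrivial ingredient, and the step I expect to be the genuine obstacle, is the $L^r$‑integrability for $r>2$ of solutions of the linear Helmholtz equation on $\Hy^N$: this is where the hyperbolic geometry really enters and where the Limiting Absorption Principle and the fine decay/oscillation estimates for radial solutions developed in the body of the paper are needed. Granting that input, the remainder of the argument, namely the construction of the nondispersive wave $e^{i\mu t}\psi_0$ and the contradiction above, is entirely elementary and independent of $N$.
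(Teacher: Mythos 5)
Your proof is correct and takes essentially the same approach as the paper: both select a nontrivial hyperbolic Herglotz wave $\psi_0=\mathcal F_0^{(+)}(\lambda)^*g$, which lies in $L^r(\Hy^N)$ for all $r>2$ by Corollary~\ref{cor:LAP}, form the standing wave $\psi(x,t)=e^{i\omega t}\psi_0(x)$, and observe that $\|\psi\|_{L^p(\R;L^q(\Hy^N))}=\infty$ for $p<\infty$ while $C\|\psi_0\|_{L^r(\Hy^N)}<\infty$. Your write-up merely fills in the one-line verification that $\psi$ solves the free Schr\"odinger equation with the explicit frequency $\omega=\frac{(N-1)^2}{4}+\lambda^2$, which the paper leaves implicit.
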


 \begin{rmq}
   The analogous statement holds for the initial value problem for the wave equation in $\Hy^N$ and thereby
   complements the results on Strichartz estimates 
   from~\cite{AnkPi_WaveKG,MetTay_NLWaves,Pierf_WeightedStrichartz,Tat_Strichartz}.
 \end{rmq}
 
  Next we present our results for the Nonlinear Helmholtz Equation~\eqref{eq:NLH}, which has not  been
  considered in the literature so far. For $-\lambda^2$ replaced by
  $+\lambda^2$, results on positive and sign-changing solutions can for instance be found
  in~\cite{GanSan_Signchanging,GanSan_nondegenacy,ManSan_OnASemilinear}. We stress that the techniques used
  in these papers are in spirit close to their Euclidean counterparts and the latter change drastically
  according to the sign in front of $\lambda^2$. A much more helpful reference are the
  papers by Guti\'{e}rrez \cite{Gu}, Evequoz, Weth \cite{Ev,EW} and~\cite{Man_uncountably,MMP} where the
  Nonlinear Helmholtz Equation was studied in Euclidean space. In this setting the operator $
  -\Delta_{\Hy^N} - (N-1)^2/4$ is replaced by the negative Euclidean Laplacian so that 0 is again the bottom
  of the essential spectrum.
  Our intention is to demonstrate that \eqref{eq:NLH} can be handled
  much more easily compared to its Euclidean analogue, which is due to a stronger Limiting Absorption
  Principle for the Helmholtz operator $L-\lambda^2$ that we will prove in Section~\ref{sec:ResEst}. Using these results we can follow the
  lines of~\cite{Gu,Man_uncountably} and prove the existence of uncountably many small solutions via a fixed
  point argument.

  \begin{thm}\label{thm:existence_small_solutions}
    Let $N\geq 2, \Gamma\in L^\infty(\Hy^N),\lambda>0$ and $p>2$. Then~\eqref{eq:NLH} has uncountably many
    small solutions lying in $W^{2,r}(\Hy^N)$ for all $r\in (2,\infty)$.
  \end{thm}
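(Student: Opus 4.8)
The plan is to set up a fixed point argument in the space $X := \bigcap_{r \in (2,\infty)} W^{2,r}(\Hy^N)$, or more practically to fix one convenient exponent pair and then bootstrap. First I would recall that for $p > 2$ we seek $u$ solving $u = \mathcal{R}_\lambda(\Gamma|u|^{p-2}u)$, where $\mathcal{R}_\lambda = (L-\lambda^2)^{-1}$ is the resolvent furnished by the Limiting Absorption Principle proven in Section~\ref{sec:ResEst}. The key analytic input I would assume from that section is a mapping estimate of the form $\|\mathcal{R}_\lambda f\|_{W^{2,r}} \leq C\|f\|_{L^{r'}}$ (or between suitable dual Lebesgue exponents), valid for $r$ in an open range including a neighbourhood of some exponent $r_0 > 2$; this is the hyperbolic-space analogue of the Evequoz--Weth resolvent estimates, and the claim in the excerpt that solutions lie in $W^{2,r}$ for \emph{all} $r \in (2,\infty)$ strongly suggests the LAP gives boundedness $L^{r'} \to W^{2,r}$ for all such $r$, which is stronger than the Euclidean case.

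The core steps are then as follows. (i) Define $T(u) := \mathcal{R}_\lambda(\Gamma|u|^{p-2}u)$ and check that, for a well-chosen $r$, the nonlinear superposition operator $u \mapsto \Gamma|u|^{p-2}u$ maps $W^{2,r}(\Hy^N)$ (or $L^q$ for the relevant $q$) into $L^{r'}(\Hy^N)$ with the estimate $\||u|^{p-2}u\|_{L^{r'}} \lesssim \|u\|_{W^{2,r}}^{p-1}$; this requires a Sobolev embedding $W^{2,r}(\Hy^N) \hookrightarrow L^{(p-1)r'}(\Hy^N)$, whose availability for $r$ near $r_0$ and $p > 2$ should be checked against the dimension $N$ — in high dimensions one restricts $r$ to a subinterval of $(2,\infty)$ for this single-exponent scheme and recovers the full range afterwards by bootstrapping with the mapping properties of $\mathcal{R}_\lambda$. (ii) Conclude that $\|T(u)\|_{W^{2,r}} \leq C\|\Gamma\|_\infty \|u\|_{W^{2,r}}^{p-1}$ and, similarly, a local Lipschitz estimate $\|T(u)-T(v)\|_{W^{2,r}} \leq C\|\Gamma\|_\infty(\|u\|^{p-2}+\|v\|^{p-2})\|u-v\|_{W^{2,r}}$ using $||a|^{p-2}a - |b|^{p-2}b| \lesssim (|a|^{p-2}+|b|^{p-2})|a-b|$ together with H\"older. (iii) Since $p-1 > 1$, for $\rho > 0$ small enough the ball $B_\rho \subset W^{2,r}$ is mapped into itself and $T$ is a contraction there, so Banach's fixed point theorem yields a solution $u_\rho \in B_\rho$. (iv) To get \emph{uncountably many} solutions, I would follow \cite{Gu,Man_uncountably}: either exploit a symmetry/scaling degree of freedom, or — more robustly — observe that the zero solution is one fixed point and that the contraction argument run in balls of varying small radii, or with a small inhomogeneous perturbation parameter, produces a continuum of distinct small solutions; concretely, one can introduce a parameter $t \in (0,t_0)$ and solve $u = t\,\phi + \mathcal{R}_\lambda(\Gamma|u|^{p-2}u)$ for a fixed nontrivial $\phi$ in the range of $\mathcal{R}_\lambda$, obtaining for each $t$ a distinct solution $u_t$ with $\|u_t\| \to 0$. (v) Finally, bootstrap: given the solution $u$ in $W^{2,r}$ for the initially chosen $r$, feed it back through $\mathcal{R}_\lambda$ using the full range of mapping estimates from the LAP and the Sobolev embeddings to upgrade $u$ to $W^{2,r}(\Hy^N)$ for every $r \in (2,\infty)$.

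The main obstacle I anticipate is \textbf{(a)} making sure the resolvent estimate from Section~\ref{sec:ResEst} is quoted in exactly the form needed — i.e.\ as a bound $L^{r'}(\Hy^N) \to W^{2,r}(\Hy^N)$ uniform in (or at least finite for) $\lambda$ in compact subsets of $(0,\infty)$ and on an appropriate range of $r$ — and \textbf{(b)} reconciling the Sobolev exponents so that the composition $W^{2,r} \to L^{(p-1)r'} \to$ (nonlinearity) $\to L^{r'} \to W^{2,r}$ actually closes for the given $N$ and all $p > 2$. In Euclidean space this is precisely where the restriction to a specific range of exponents enters and where the Stein--Tomas-type threshold $2N/(N-1)$ shows up; the claim here is that the hyperbolic LAP is strong enough to remove such restrictions, so the delicate point is verifying that the range of admissible $r$ is genuinely all of $(2,\infty)$ rather than a proper subinterval. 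The uncountability in step (iv) is not hard once the contraction scheme is in place — it is essentially a bookkeeping argument showing the solutions produced for different parameter values are pairwise distinct and all small — but it does need the parameter-dependence of the fixed point to be handled carefully (continuity of $t \mapsto u_t$ and $u_t \neq u_{t'}$ for $t \neq t'$).
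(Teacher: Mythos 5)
Your overall framework---a contraction mapping argument built on the Limiting Absorption Principle estimates of Section~\ref{sec:ResEst}---matches the paper's approach, but there is a genuine error in your uncountability step and a gap you flag but do not close.

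The error is in step (iv): you propose solving $u = t\phi + \mathcal{R}_\lambda(\Gamma|u|^{p-2}u)$ with $\phi$ ``in the range of $\mathcal{R}_\lambda$.'' If $\phi = \mathcal{R}_\lambda h$ with $h\neq 0$, the resulting fixed point solves $Lu - \lambda^2 u = th + \Gamma|u|^{p-2}u$, which is not \eqref{eq:NLH}; and the only element of the range of $\mathcal{R}_\lambda$ solving the homogeneous equation is $\phi = 0$, so the scheme collapses to the trivial solution. The inhomogeneous term must solve the \emph{homogeneous} Helmholtz equation $L\phi - \lambda^2\phi = 0$. The paper chooses precisely the hyperbolic Herglotz waves $\mathcal{F}_0^{(+)}(\lambda)^*g$, $g\in L^2(\R^{N-1})$, from Corollary~\ref{cor:LAP}, defining $T_g(u) = \mathcal{F}_0^{(+)}(\lambda)^*g + \mathcal{R}_\lambda(f(\cdot,\chi(u)))$; a fixed point then genuinely solves \eqref{eq:NLH}, and uncountably many solutions follow because $\mathcal{F}_0^{(+)}(\lambda)^*$ is injective on $L^2(\R^{N-1})$ (Corollary~I.4.6 of~\cite{IsoKur_Introduction}). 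Your alternative of running the contraction in balls of varying small radii fails for the same reason: with no forcing term, the unique fixed point in every sufficiently small ball is $u=0$.

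Second, you correctly flag as obstacle (b) the problem of closing the exponent bookkeeping for \emph{all} $p>2$ in every dimension, but do not resolve it, and a direct scheme for the raw power nonlinearity does indeed break down for large $p$ and large $N$. The paper sidesteps this via truncation: insert a cutoff $\chi\in C^\infty(\R)$ with $\chi(z)=z$ for $|z|\leq \frac{1}{2}$, $\chi(z)=1$ for $|z|\geq 1$, and run the contraction for the modified equation $Lu-\lambda^2 u = f(x,\chi(u))$ in $L^s(\Hy^N)$ for a suitable $s$. Since $|\chi(u)|\leq 1$ one has $|f(x,\chi(u))|\leq C|\chi(u)|^{p-1-\delta}$ for any $\delta\in(0,p-2)$, so the superposition operator maps $L^s$ into $L^{s/(p-1-\delta)}$ with no upper bound on $p$. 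The small fixed point is afterwards shown to satisfy $\|u\|_{L^\infty(\Hy^N)}\leq \frac{1}{2}$ via elliptic estimates, whence $\chi(u)=u$ and the original equation is solved; the regularity $u\in W^{2,r}(\Hy^N)$ for all $r\in(2,\infty)$ then comes from bootstrapping the resolvent mapping properties together with the global $L^p$-estimates of Theorem~A in~\cite{Tay_Lpestimates}, much as you sketch in step (v). Working in $L^s(\Hy^N)$ rather than $W^{2,r}(\Hy^N)$ from the outset, as the paper does, avoids the awkward matching of Sobolev and resolvent exponents you were worried about.
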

 
  This result actually holds under much weaker assumptions on the nonlinearity. 
  In fact, in the proof we will replace $\Gamma |u|^{p-2}u$ by any function $f\in C^1(\R_+\times\R)$
  satisfying $|f(x,z)|+|z||f_z(x,z)|\leq C|z|^{q-1}$ for some $C>0$ and all $x\in\Hy^N,|z|\leq 1$. We
  mention that no upper bound for $p$ is necessary. The solutions from the previous theorem are parametrized by hyperbolic Herglotz waves and
  therefore can even be shown to form a continuum in the above-mentioned Sobolev spaces as
  in~\cite{Man_uncountably}. Large solutions of~\eqref{eq:NLH} can be constructed using a dual variational
  approach as in~\cite{Ev:OnThe,EW}. Note that classical variational approaches are not suitable since
  solutions are not expected to lie in $L^2(\Hy^N)$, as we will show further below. The dual variational
  method yields the following.

\begin{thm}\label{thm:existence_largesolutions}
  Let $N\geq 2,\lambda>0,\Gamma\in L^\infty(\Hy^N)$ satisfy $\Gamma\geq 0,\Gamma\not\equiv 0$ and $2<p<2^*$.
  Then~\eqref{eq:NLH} has a nontrivial solution $u\in W^{2,r}(\Hy^N)$ for all $r\in (2,\infty)$ provided
  \begin{itemize}
    \item[(i)] $\Gamma(x) \geq \Gamma_0$ where $\Gamma_0 = \lim_{d(0,x)\to\infty} \Gamma(x)$.
  \end{itemize} 
  Under the assumptions
  \begin{itemize}
    \item[(ii)] $\Gamma(x)\to 0$ as $d(0,x)\to \infty$ or
    \item[(iii)] $\Gamma$ is radially symmetric about some point in $\Hy^N$
  \end{itemize}
  there is a sequence of nontrivial solutions $u_n\in W^{2,r}(\Hy^N)$ for all $r\in (2,\infty)$ that is
  unbounded in $L^p(\Hy^N)$. In the case (iii), the solutions are radial.
\end{thm}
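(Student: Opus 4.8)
The plan is to follow the dual variational ("Evequoz--Weth") scheme, adapted to $\Hy^N$ using the new Limiting Absorption Principle from Section~\ref{sec:ResEst}. First I would set up the dual functional: writing $L = -\Delta_{\Hy^N} - (N-1)^2/4$, the new resolvent estimates should give that the real part $\mathcal{R}:= \Real (L-\lambda^2-i0)^{-1}$ maps $L^{p'}(\Hy^N)\to L^p(\Hy^N)$ continuously for $2<p<2^*$ (and more), is symmetric, and — this is the crucial sign information — is \emph{positive}, i.e. $\int (\mathcal{R}v) v \geq 0$, at least on the relevant subspace, mirroring the Euclidean picture in \cite{Ev,EW}. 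Setting $v = \Gamma^{1/p'}|u|^{p-2}u$, equation~\eqref{eq:NLH} becomes the dual equation $|v|^{p'-2}v = \Gamma^{1/p'}\mathcal{R}(\Gamma^{1/p'}v)$, which is the Euler--Lagrange equation of
\begin{equation*}
  J(v) = \frac{1}{p'}\int_{\Hy^N}|v|^{p'} - \frac12 \int_{\Hy^N} \Gamma^{1/p'}v\,\cdot\,\mathcal{R}(\Gamma^{1/p'}v)
\end{equation*}
on $L^{p'}(\Hy^N)$. The regularity $u\in W^{2,r}$ for all $r\in(2,\infty)$ then follows from elliptic regularity / the mapping properties of the resolvent once $v\in L^{p'}$ is found, exactly as in the earlier theorems.

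Next I would verify the mountain-pass geometry of $J$: $J(0)=0$, $J$ has a local minimum-type behaviour near $0$ because the quadratic term is dominated by $\|v\|_{p'}^2$ while $p'<2$ makes the $\|v\|_{p'}^{p'}$ term the leading small-$v$ term with the right sign — actually since $p'<2$, one checks $J$ is positive on a small sphere and negative somewhere far out (taking $v$ large multiples of a fixed function on which the quadratic form is strictly positive, using $\Gamma\geq 0,\Gamma\not\equiv0$). This yields a Palais--Smale sequence at the mountain-pass level $c>0$. For case (i), the key is a compactness/Brezis--Lieb argument: the asymptotic problem has $\Gamma$ replaced by the constant $\Gamma_0$, and the hypothesis $\Gamma\geq\Gamma_0$ ensures the mountain-pass level lies strictly below the "problem at infinity" level, so that a splitting lemma forces a PS sequence to converge (up to the non-compact group of hyperbolic isometries, which must be factored out) to a nontrivial critical point. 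This is where the argument differs most from $\R^N$ and where I expect to lean hardest on the \emph{stronger} Limiting Absorption Principle: in $\Hy^N$ the resolvent should be better behaved (e.g. $\mathcal{R}\colon L^{p'}\to L^p$ with compact "localizing" pieces), so that concentration at infinity is easier to rule out.

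For cases (ii) and (iii) I would instead produce an \emph{unbounded} sequence of solutions by a symmetric/linking argument. Under (ii), $\Gamma\to 0$ at infinity makes the nonlinear term a compact perturbation (a vanishing-at-infinity weight against the resolvent gives compactness of $v\mapsto \Gamma^{1/p'}\mathcal{R}(\Gamma^{1/p'}v)$ on $L^{p'}$), so $J$ satisfies $(PS)_c$ for all $c>0$; then, since $J$ is even, a standard $\Z_2$-equivariant argument — fountain-type theorem or a genus-based minimax over increasing-dimension subspaces on which the quadratic form is positive definite — produces critical values $c_n\to\infty$, hence a sequence $v_n$ with $J(v_n)\to\infty$, which translates (via $\|v_n\|_{p'}^{p'}\sim J(v_n)$ on the critical set and $u_n \sim |v_n|^{p'-2}v_n\Gamma^{\dots}$) to $\|u_n\|_p\to\infty$. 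Under (iii), I would work from the start on the subspace $L^{p'}_{\mathrm{rad}}$ of functions radial about the symmetry point; radial functions on $\Hy^N$ enjoy a compact-embedding / decay advantage (no non-compact translation subgroup survives inside the radial class), restoring $(PS)_c$ for all $c>0$, and the same even minimax gives radial solutions with $\|u_n\|_p\to\infty$, automatically radial by the symmetric criticality principle. The main obstacle throughout is establishing the precise functional-analytic input — positivity and $L^{p'}\to L^p$ boundedness of $\mathcal{R}$, together with the compactness statements in (ii) and (iii) and the level comparison in (i) — all of which must be read off from the Limiting Absorption Principle proved in Section~\ref{sec:ResEst}; once these are in hand, the variational machinery is routine.
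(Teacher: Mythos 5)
Your overall scheme matches the paper's: dual variational functional $J$ on $L^{p'}(\Hy^N)$, mountain pass geometry, and symmetric mountain pass/fountain type arguments for the multiplicity cases. Parts (ii) and (iii) are essentially identical to the paper's proof (compactness of $v\mapsto\Gamma^{1/p}\mathcal R_\lambda(\Gamma^{1/p}v)$ from $\Gamma\to 0$ in (ii), following Lemma~5.2 of~\cite{EW}; restriction to $L^{p'}_{rad}$ and compact radial Sobolev embeddings in (iii); then the Symmetric Mountain Pass Theorem). One overstatement you should drop: the operator $\mathcal R_\lambda=\Real(L-\lambda^2-i0)^{-1}$ is \emph{not} a positive operator --- its spectral representation has the sign-changing kernel $\mathrm{p.v.}\,(s^2-\lambda^2)^{-1}$, just as in $\R^N$. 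What both you and the paper actually need is positivity of the quadratic form on well-chosen subspaces, which the paper realizes by taking $z_j=\max\{\Gamma^{-1/p},\delta\}\psi_j(P)h$ with the $\psi_j$ supported outside $[-\lambda,\lambda]$, so that $\int\psi_j(s)^2(s^2-\lambda^2)^{-1}\|\mathcal F_0^{(+)}(s)h\|^2\,ds>0$ via~\eqref{eq:identityRlambda}.

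The genuine gap is in your treatment of case~(i). You propose a Brezis--Lieb splitting with a strict level comparison between $J$ and a ``problem at infinity'' with constant coefficient $\Gamma_0$. That logic degenerates exactly in the case $\Gamma\equiv\Gamma_0$ (allowed by the hypothesis $\Gamma\geq\Gamma_0$), where the problem \emph{is} its own problem at infinity, so there is no strict inequality between levels and no compactness gained by comparison. This constant case is precisely what the paper treats directly, and what it identifies as the crux: one must verify the ``Nonvanishing Property'' of~\cite{EW}, i.e.\ that a bounded PS sequence with positive limiting energy does not vanish in every geodesic ball. The paper's new ingredient here --- and what the introduction flags as the simplification compared to $\R^N$ --- is the Stein--Kunze estimate (Lemma~4.1 of~\cite{AnkPi_WaveKG}) combined with the exponential decay of $G$ in~\eqref{eq:estimate_G}: truncating the kernel to $\{1/R\le d(x,y)\le R\}$ loses only $O(1)$ at level $c/2$, and then a covering of $\Hy^N$ by geodesic balls of fixed radius (bounded geometry) yields a ball $B_{2R}(x_n)$ with $\liminf\int_{B_{2R}(x_n)}|v_n|^{p'}>0$. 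Translating by $\tau_{x_n}$ and invoking interior $W^{2,p}$ bounds then gives a nontrivial weak limit. Only \emph{after} this constant-$\Gamma$ case is established does the paper invoke the comparison argument of Theorem~4.3 in~\cite{Ev:OnThe} for general $\Gamma\geq\Gamma_0$. So your instinct that ``concentration at infinity is easier to rule out'' in $\Hy^N$ is correct, but the mechanism is the Stein--Kunze/exponential-decay-driven nonvanishing, not a level comparison, and without it your argument does not close for constant $\Gamma$.
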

 
 Here, $0$ stands for the origin in hyperbolic space and $d(x,y)$ denotes the geodesic distance of two points
 $x,y\in \Hy^N$. Radially symmetric functions only depend on the geodesic distance to one particular point in
 $\Hy^N$, which we denote by $r$ in the following. Let us mention that the dual variational method is flexible
 enough to treat also higher order problems as in~\cite{BCM_FourthOrder} or negative $\Gamma$ as in~\cite{MMP}.
 
 \medskip
  
 In our final result, we restrict
 our attention to radially symmetric solutions. One motivation is our interest in exact pointwise decay
 properties of solutions to~\eqref{eq:NLH}, which do not seem to be availabe in general.
  Let us point out that, using the ideas of Lemma $2.9$ in~\cite{EW}, it is possible to prove that
  the solutions constructed in~\eqref{thm:existence_largesolutions} decay at least like $e^{(1-N)r/2}$ as $r\to\infty$ provided that
  $p>4$. Not being convinced in the optimality of this result, we dispense with a proof. 
  In the radial case we can show with elementary means that the solutions decay exactly like
  $e^{(1-N)r/2}$ as $r\to\infty$ and, in particular, do not lie in $H^1(\Hy^N)$. We show this to hold without
  any restriction on $p$. As a consequence those solutions can not be found with classical variational
  methods, as we mentioned above. 
  Let us point out that finite energy solutions of nonresonant problems in $\Hy^N$ as
  in~\cite{ManSan_OnASemilinear} decay faster, see Remark~3.8 in that paper. Moreover, we prove that radial
  solutions oscillate. 
 Another interesting feature is that we can analyze radial solutions on manifolds $M$ which are much more
 general than $\Hy^N$ such as Damek-Ricci spaces \cite{ADY,APV15,Pierf_WeightedStrichartz}. Indeed, it is known
 that the radial part of the Laplace-Beltrami operator on such a manifold is given by 
 $$
    \partial_{rr} + \frac{f'(r)}{f(r)}\partial_r
    \qquad\text{where }
    f(r)= \sinh^{m+k}(\frac{r}{2}) \cosh^k(\frac{r}{2}),
 $$ 
 see (2.11)~\cite{ADY}. Here, $m,k\in\N$ and the dimension of the manifold is $m+k+1$. The corresponding
 formula also holds in hyperbolic space ($f(r)=\sinh(r)^{N-1}$) and Euclidean space ($f(r)=r^{N-1}$) and even
 more general classes of rotationally symmetric Cartan-Hadamard manifolds. Each of these examples satisfies
 assumption (H1) that we will need.
 The full set of conditions reads as follows:   
\begin{itemize}
  \item[(H1)] $f\in C^1(\R)$ with $f'>0$ and such that $\log(f)'(r)\to \kappa$, $\log(f)''(r)\to 0$ as
  $r\to\infty$ and $(\log(f)')^2-\kappa^2$ is integrable near infinity 
  for some $\kappa\in [0,\infty)$. 
  \item[(H2)] $V\in C^1(\R),V>0$ with $V(r)\to V_\infty>\kappa^2/4$ and $V'\in L^1(\R_+)$
  \item[(H3)] $\Gamma\in C^1(\R),\Gamma\geq 0$ with $\Gamma(r)\to \Gamma_\infty\geq 0$
  and $|\Gamma'|/\Gamma\in L^1(\R_+)$ or $\Gamma\equiv 0$.
\end{itemize}
 Under these conditions we prove the following:

\begin{thm}\label{thm:radial}
  Assume (H1),(H2),(H3) and $p>2$. Then the solution $u_\gamma$ of 
  $$
    - u''(r) - \frac{f'(r)}{f(r)} u(r) - V(r)u = \Gamma(r)|u|^{p-2}u \quad\text{on }[0,\infty),\quad
    u(0)=\gamma,\;u'(0)=0
  $$  
  has infinitely many zeros and satisfies for all $r\geq 0$ 
  \begin{align} \label{notL2}
    \begin{aligned}
      |u_\gamma (r)|^2+|u^\prime_\gamma (r)|^2   
      &\leq  C(V(0)\gamma^2+\Gamma(0)|\gamma|^p)(1+f(r)), \\
     |u_\gamma (r)|^2+|u^\prime_\gamma (r)|^2  
      &\geq  c(V(0)\gamma^2+\Gamma(0)|\gamma|^p)(1+f(r)) 
    \end{aligned}
  \end{align}
  where $c,C>0$ are independent of $\gamma$. 
\end{thm}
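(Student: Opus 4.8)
\medskip
\noindent\textbf{Proof strategy.}
We may assume $\gamma\neq 0$, in which case uniqueness for the initial value problem forces $u_\gamma$ and $u_\gamma'$ never to vanish simultaneously, and we set $C_\gamma:=V(0)\gamma^2+\Gamma(0)|\gamma|^p>0$. The first step is a Liouville transformation reducing the equation to Schr\"odinger form. Fix $R\ge 0$ with $f>0$ on $[R,\infty)$ and put $v:=f^{1/2}u_\gamma$ there; a direct computation removes the first order term and yields
\[
 -v''-q(r)v=c(r)\,|v|^{p-2}v,\qquad
 q:=V-\tfrac12\tfrac{f''}{f}+\tfrac14\Bigl(\tfrac{f'}{f}\Bigr)^2,\quad c:=\Gamma\,f^{(2-p)/2}\ge 0 .
\]
By (H1), $\tfrac{f''}{f}=(\log f)''+((\log f)')^2\to\kappa^2$ and $(\log f)''\to 0$, so with (H2) we get $q(r)\to\mu^2:=V_\infty-\kappa^2/4>0$; enlarging $R$ we may assume $q\ge\mu^2/2$ on $[R,\infty)$. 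A Gronwall estimate for the weighted energy $\mathcal F:=f\,\bigl[u_\gamma'^2+Vu_\gamma^2+\tfrac2p\Gamma|u_\gamma|^p\bigr]$, using $\mathcal F'=-f'u_\gamma'^2+(fV)'u_\gamma^2+\tfrac2p(f\Gamma)'|u_\gamma|^p$ together with (H2)--(H3) (so that $\mathcal F'\le\bigl(2\tfrac{f'}{f}+\tfrac{|V'|}{V}+\tfrac{|\Gamma'|}{\Gamma}\bigr)\mathcal F$), shows that $u_\gamma$ exists on all of $[0,\infty)$ and gives the upper bound in \eqref{notL2}; on $[0,R]$ one argues directly with the unweighted energy, which stays comparable to its initial value $\asymp C_\gamma$.

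\smallskip
\noindent\emph{Oscillation.} Since $c\ge 0$, at any point with $v>0$ we have $-v''\ge(\mu^2/2)v$. If $v>0$ on some $[R',\infty)$ then $v$ is concave there; either $v'$ eventually becomes negative, forcing $v\to-\infty$, or $v'\ge 0$ throughout, so $v\ge v(R')>0$ and then $v''\le-(\mu^2/2)v(R')$ forces $v'\to-\infty$ --- both impossible. The same applies to $-v$, so $v$, and therefore $u_\gamma$, has arbitrarily large zeros.

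\smallskip
\noindent\emph{Sharp amplitude.} Introduce the Pr\"ufer variables $v=\rho\sin\varphi,\ v'=\mu\rho\cos\varphi$ on $[R,\infty)$, so $\rho^2=v^2+\mu^{-2}v'^2$ and $v^2+v'^2\asymp\rho^2$. The system gives
\[
 (\log\rho)'=\Bigl(\mu-\tfrac q\mu\Bigr)\sin\varphi\cos\varphi-\tfrac{c}{\mu\rho}|v|^{p-2}v\cos\varphi,\qquad
 \varphi'=\mu\cos^2\varphi+\tfrac q\mu\sin^2\varphi+\tfrac{c\,|v|^{p-2}v}{\mu\rho}\sin\varphi,
\]
so $\varphi'\to\mu>0$ once $\rho$ is known to be bounded. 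The goal is to show that $\int_R^\infty(\log\rho)'$ converges, which confines $\rho$ between two positive constants (so that $u_\gamma=f^{-1/2}v$ decays exactly like $f^{-1/2}$) and, passing back via $u_\gamma=f^{-1/2}v$, $u_\gamma'=f^{-1/2}\bigl(v'-\tfrac12\tfrac{f'}{f}v\bigr)$ (using that $f'/f$ is bounded on $[R,\infty)$) and combining with the behaviour on $[0,R]$, yields the two-sided comparison \eqref{notL2} with constants independent of $\gamma$; the explicit $\gamma$--dependence is just the initial value $V(0)\gamma^2+\Gamma(0)|\gamma|^p$ of the unweighted energy. To control $(\log\rho)'$ one writes
\[
 \mu-\tfrac q\mu=\tfrac1\mu\Bigl[(V_\infty-V)+\tfrac12(\log f)''+\tfrac14\bigl(((\log f)')^2-\kappa^2\bigr)\Bigr],
\]
where the last summand is in $L^1$ by (H1), the first equals $\int_r^\infty V'$ with $V'\in L^1$ by (H2), and the middle is the derivative of $\tfrac12\bigl((\log f)'-\kappa\bigr)\to 0$; multiplying by $\sin\varphi\cos\varphi=\tfrac12\sin 2\varphi$ and integrating, a single integration by parts (legitimate because $\varphi'$ is bounded and bounded below) converts each term into an absolutely convergent integral plus boundary terms tending to $0$. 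The nonlinear term is bounded by $\tfrac1\mu c\,\rho^{p-2}$ and carries the oscillatory factor $|\sin\varphi|^{p-2}\sin\varphi\cos\varphi$, whose $\varphi$--mean vanishes, so another integration by parts plus a continuation/bootstrap argument (estimating $\sup_{[R,r]}\rho$ in terms of itself and closing the inequality, which is where $p>2$ enters) shows $\rho$ is bounded; running the same estimates in reverse bounds $\rho$ from below.

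\smallskip
\noindent The crux --- and the expected main obstacle --- is precisely this oscillatory--integral control of $\rho$: a plain energy/Gronwall argument only reproduces the one--sided bound of the first step, and it is exactly the conditional--integrability hypotheses ``$((\log f)')^2-\kappa^2\in L^1$'' in (H1) and ``$V'\in L^1$'' in (H2) that upgrade it to the sharp two--sided comparison. A secondary technical point is to handle the genuinely nonlinear interaction uniformly in $\gamma$, which is done by the bootstrap just sketched.
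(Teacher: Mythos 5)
Your plan shares some ingredients with the paper (the Liouville substitution $v=f^{1/2}u$, the Sturm-type oscillation argument, and an a priori $Z$-type energy estimate for global existence) but it diverges at the decisive step, and the route you choose has real gaps.

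\textbf{The weighted energy $\mathcal F=2fZ$ does not close.} Your Gronwall inequality $\mathcal F'\le(\tfrac{f'}{f}+\tfrac{|V'|}{V}+\tfrac{|\Gamma'|}{\Gamma})\mathcal F$ is correct, but the coefficient $f'/f\to\kappa$ is \emph{not} integrable unless $\kappa=0$, so the exponential factor is $\exp\bigl(\int_R^r f'/f+O(1)\bigr)\asymp f(r)/f(R)$; after cancelling the $f$'s this only reproduces $Z(r)\lesssim Z(R)$, i.e.\ the same information you already had from $Z$ alone. It gives nothing about $f(r)\bigl(u^2+u'^2\bigr)$, which is what the sharp two-sided comparison \eqref{notL2} is about. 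The paper's energy is \emph{not} $fZ$; it is
\[
\psi=\tfrac12 v'^2+f\Bigl(\tfrac12\tilde V u^2+\tfrac1p\Gamma|u|^p\Bigr),\qquad \tilde V=V-\tfrac{\kappa^2}{4},
\]
and the whole point is that $\tfrac12 v'^2=\tfrac12 fu'^2+\tfrac12 f'uu'+\tfrac18\tfrac{(f')^2}{f}u^2$ carries a cross term $\tfrac12 f'uu'$, which, combined with the $\kappa^2/4$ shift in $\tilde V$, makes the prefactor in $\psi'$ equal to $\tfrac12(\log f)''+\tfrac14\bigl((\log f)'^2-\kappa^2\bigr)$ plus terms controlled by $|V'|$ and $|\Gamma'|/\Gamma$, \emph{all of which are integrable under (H1)--(H3)}. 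That cancellation is the crux, and replacing $\psi$ by $fZ$ destroys it.

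\textbf{The Pr\"ufer/integration-by-parts route is a genuinely different strategy, but as sketched it is not rigorous under (H1)--(H2).} After one integration by parts, $\int g\sin 2\varphi\,dr$ produces a term involving $\bigl(g/\varphi'\bigr)'$ and hence $\varphi''$, i.e.\ $q'$, i.e.\ $(\log f)'''$ (and $V\in C^2$), none of which is guaranteed by (H1)--(H2). Moreover the "bootstrap" for the nonlinear term is only sketched: the a priori bound from the $Z$-energy gives $\rho^2\lesssim f$, hence $c\rho^{p-2}=\Gamma f^{(2-p)/2}\rho^{p-2}\lesssim 1$ without any decay, so that contribution is again only conditionally integrable and needs the same phase-integration machinery, now self-referentially through $\rho$ and $\varphi'$. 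A Levinson-type argument along these lines can be made to work for \emph{linear} equations with extra regularity, but you would have to strengthen the hypotheses and supply substantially more detail. The paper's proof does the two-sided estimate with a single Gronwall inequality for $\psi$ in each direction and finishes the lower bound by evaluating $\psi$ at a zero of $uu'$ (whose existence is guaranteed by the oscillation step), all within the stated regularity.

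In short: the oscillation argument is fine, the a priori boundedness of $Z$ is fine, but the central estimate is not obtained. The missing idea is the specific energy $\psi$ built from $v'$ and $\tilde V$, whose derivative has an integrable Gronwall coefficient; your $\mathcal F$ is the "wrong" energy and the Pr\"ufer alternative needs more hypotheses and more work than you indicate.
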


 The proof of Theorem~\ref{thm:radial} partly generalizes and improves Theorem~1.2~\cite{MMP} given that we
 can allow for a quite large class of functions $f$ and that the bounds on the right hand side
 in~\eqref{notL2} are more explicit. Moreover, though being similar, the proof is much shorter.
 As in Theorem~1.2  or Theorem~2.10 in~\cite{MMP} the method of
 proof is also suitable for more general nonlinearities, including $\Gamma |u|^{p-2}u$ with
 negative $\Gamma$. Note that in this case one can show that radial solutions are unbounded if $|\gamma|$ is
 large and bounded for small $|\gamma|$ provided some mild additional assumptions on $f,V,\Gamma$ are
 satisfied.
 
 \medskip

 Let us give a short outline of this paper and comment on the notation that we will employ. In
 Section~\ref{sec:ResEst} we will prove resolvent estimates for Helmholtz operators in $\Hy^N$ and use them
 for the proof of a Limiting Absorption Principle. This will be used  to
 prove (very quickly) Theorem~\ref{thm:NoStrichartz} in Section~\ref{sec:Nostrichartz} and 
 Theorem~\ref{thm:existence_small_solutions} via a fixed point argument in Section~\ref{sec:SmallSolutions}. 
 In Section~\ref{sec:LargeSolutions} we implement the dual variational method following \cite{EW} in order to
 prove Theorem~\ref{thm:existence_largesolutions}. In the final section, we prove Theorem~\ref{thm:radial}. 
 In the following, $C$ denotes a generic constant that may change from line to line. The
 $N$-dimensional hyperbolic space $\Hy^N=\{x\in\R^N: x_N>0\}$ is considered in the half space model with
 geodesic distance $d(x,y)=2\arcsin(|x-y|/(2\sqrt{x_Ny_N}))$ and volume element
 $dV= x_N^{-2}((dx')^2+dx_N^2 )= \sinh(r)^{N-1}\,dr\,d\theta$. Its Laplace-Beltrami operator is given by 
 $\Delta_{\Hy^N} := x_N^2(\partial_1^2+\ldots+\partial_N^2) -  (N-2)x_N \partial_N$.


\section{Resolvent estimates} \label{sec:ResEst}

In this section we discuss resolvent estimates for the operators $L-(\lambda+i\mu)^2$ for
$\lambda>0,\mu\neq 0$ and 
$$
  L := -\Delta_{\Hy^N} -\dfrac{(N-1)^2}{4}.
$$
It is well-known that the spectrum of $L$ is given by $\sigma(L)=[0,\infty)$ so that the resolvent of
$L-\lambda^2$ does not exist in the classical sense. However, as in the Euclidean case, it is possible to
prove a Limiting Absorption Principle which yields a solution of the linear Helmholtz equation
$Lu-\lambda^2 u = f$ for functions $f$ that decay sufficiently fast at infinity, see
Theorem~I.4.2~\cite{IsoKur_Introduction}.
In this approach the resolvents of $L-(\lambda+i\mu)^2$ are studied and function spaces are identified,
in which the limits of the resolvent operators persist as their imaginary parts 
tend to zero from the right respectively from the left. These operators will in the following be denoted by
$(L-\lambda^2-i0)^{-1}$ respectively $(L-\lambda^2+i0)^{-1}$. In the Euclidean setting, estimates for  
$(-\Delta-\lambda^2-i0)^{-1}$ from   weighted Lebesgue spaces $L^{2,s}(\R^N)$ to $L^{2,-s}(\R^N)$
($s>\frac{1}{2}$) or even from $B(\R^N)$ to its dual $B^*(\R^N)$ are due to 
Ikebe and Saito \cite{IkebeSaito_LAP} (Theorem~1.2) as well as Agmon and H\" ormander, see Theorem~4.1
in~\cite{Agmon_Spectral}, \cite{AgHoe_Asymptotic} and Theorem~3.1 in~\cite{Agmon}.
Here,
\begin{align*}
  L^{2,s}(\R^N) &= \big\{ v\in L^2_{loc}(\R^N) : |\cdot|^sv \in L^2(\R^N) \big\}, \\
  B(\R^N) &= \Big\{ v\in L^2_{loc}(\R^N) : \sum_{j=1}^\infty 2^{\frac{j-1}{2}} \Big(
  \int_{\{2^{j-1}<|x|<2^j\}} |u(x)|^2 \,dx\Big)^{1/2} < \infty \Big\}.
\end{align*}
The counterpart for the latter estimate in the hyperbolic case was proved by Perry, see
Theorem~5.1 in~\cite{Perry_LaplaceOperator} or Theorem~I.4.2 in~\cite{IsoKur_Introduction}. In
Theorem 1.2 of~\cite{HS} Huang and Sogge proved that these operators may as well be defined as bounded linear operators
from $L^p(\Hy^N)$ to $L^q(\Hy^N)$,  where $p,q$ satisfy 
$$ 
  \frac{1}{p}-\frac{1}{q}=\frac{2}{N} \quad\text{and}\quad \min\big\{
  \big|\frac{1}{p}-\frac{1}{2}\big|,\big|\frac{1}{q}-\frac{1}{2}\big|\big\} > \frac{1}{2N}.
$$
We stress that these restrictions are essentially due to the authors' focus on uniform estimates with
respect to $\lambda$ within the range $|\lambda|^2\geq 1$. 
For our purposes such a uniform
behaviour is not needed, which allows us to modify and adapt some of the ideas from~\cite{HS} in order to
obtain resolvent estimates for larger ranges of exponents. This extension is based on recent results by
Chen and Hassell~\cite{CH}. For $\sigma_p>0$ given by
 \begin{align*}
   \sigma_p &:= \frac{2N}{p}+N-1  &&\text{if }\; 1\leq p\leq \frac{2(N+1)}{N+3},\\
   \sigma_p &:=  \frac{N-1}{p}-\frac{N-1}{2}  &&\text{if }\; \frac{2(N+1)}{N+3}\leq p <2, 
 \end{align*}
 their result about the spectral resolution $\R\ni \lambda\mapsto E_P(\lambda)$ of the selfadjoint operator
 $P:=\sqrt{L}$ reads as follows.
 
\begin{thm}[Theorem $1.6$ \cite{CH}] \label{thm:CH}
  Let $N\geq 2$ and $1\leq p<2$. Then there a $C>0$ such that the following estimate holds for all
  $\lambda>0$:
  \begin{align*}
    \| \dfrac{d}{d\lambda} E_P (\lambda) \|_{L^p(\Hy^N) \rightarrow L^{p^\prime}(\Hy^N)} 
    &\leq C  \lambda^2 &&(0<\lambda\leq 1)  \\
    \| \dfrac{d}{d\lambda} E_P (\lambda) \|_{L^p(\Hy^N) \rightarrow L^{p^\prime}(\Hy^N)} 
    &\leq C  \lambda^{\sigma_p} &&(\lambda\geq 1)
  \end{align*}
\end{thm}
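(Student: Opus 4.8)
Since the statement is quoted from~\cite{CH}, we only indicate the line of argument one would follow. The plan is to reduce the estimate to a one-dimensional question about a radial convolution kernel, and then to treat the low-energy range $0<\lambda\le1$ and the high-energy range $\lambda\ge1$ separately.

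First, since $L=-\Delta_{\Hy^N}-\rho^2$ (with $\rho=\frac{N-1}{2}$) commutes with the isometries of $\Hy^N$, the Schwartz kernel of $\frac{d}{d\lambda}E_P(\lambda)$ depends only on the geodesic distance. Combining Stone's formula
\begin{equation*}
  \frac{d}{d\lambda}E_P(\lambda) = \frac{\lambda}{\pi i}\Big((L-\lambda^2-i0)^{-1}-(L-\lambda^2+i0)^{-1}\Big)
\end{equation*}
with the spherical Plancherel theorem, this kernel equals, up to a dimensional constant, $\Phi_\lambda(d(x,y))$ with $\Phi_\lambda:=c_N|\mathbf{c}(\lambda)|^{-2}\varphi_\lambda$, where $\varphi_\lambda$ is the elementary spherical function and $\mathbf{c}$ the Harish--Chandra $c$-function; so it suffices to bound radial convolution with $\Phi_\lambda$ from $L^p(\Hy^N)$ to $L^{p'}(\Hy^N)$. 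Next I would invoke the classical asymptotics of $|\mathbf{c}(\lambda)|^{-2}$ and $\varphi_\lambda$ (coming from their expressions in terms of Legendre, i.e. Jacobi, functions and $\Gamma$-functions), with symbol-type control of all remainders: $|\mathbf{c}(\lambda)|^{-2}\asymp\lambda^2(1+\lambda)^{N-3}$ uniformly in $\lambda>0$; $|\varphi_\lambda(r)|\le\varphi_0(r)\le C(1+r)e^{-\rho r}$ for all $\lambda\in\R$, $r\ge0$; and a two-term oscillatory expansion of $\varphi_\lambda$ in $e^{\pm i\lambda r}$ valid for $\lambda r\gtrsim1$. These imply that $\Phi_\lambda$ has no singularity across the diagonal, where it has size $\asymp\lambda^{N-1}$ once $\lambda\ge1$; that on the scales $\lambda^{-1}\lesssim r\lesssim1$ it is modelled on the Euclidean spectral measure density $c_N\lambda^{N-1}\widehat{d\sigma}(\lambda\,\cdot)$ of $\sqrt{-\Delta}$ on $\R^N$ (with $d\sigma$ the surface measure on $S^{N-1}$); and that for $r\gtrsim1$ it has size $\asymp\lambda^{(N-1)/2}e^{-\rho r}$ while oscillating in $e^{\pm i\lambda r}$.

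For $0<\lambda\le1$ the crude pointwise bound $|\Phi_\lambda(r)|\lesssim\lambda^2(1+r)e^{-\rho r}$, together with the Kunze--Stein phenomenon on $\Hy^N$ --- which, owing to the exponential volume growth, makes convolution against $(1+r)e^{-\rho r}$ bounded from $L^p$ to $L^{p'}$ for every $1\le p<2$ --- immediately gives the bound $C\lambda^2$; this regime is soft. For $\lambda\ge1$ I would decompose $\Phi_\lambda$ dyadically in the distance variable and treat separately the Euclidean scales $r\lesssim1$, where $\Phi_\lambda$ is modelled on the Euclidean spectral measure of $\sqrt{-\Delta}$ at frequency $\lambda$, and the large scales $r\gtrsim1$, where it carries the extra decay $e^{-\rho r}$ while oscillating in $e^{\pm i\lambda r}$. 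On the Euclidean scales a Stein--Tomas type restriction/extension estimate in its $TT^*$ form, followed by the $\lambda^{-1}$ rescaling, contributes a power $\lambda^{\frac{2N}{p}-N-1}$; on the large scales the oscillation in $e^{\pm i\lambda r}$ combined with the exponential volume growth (via a Kunze--Stein bound, or equivalently an integration by parts in the phase followed by such a bound) makes the dyadic pieces summable and contributes a power $\lambda^{(N-1)(\frac1p-\frac12)}$. The exponent $\sigma_p$ is the larger of these two, so that the first dominates precisely for $1\le p\le\frac{2(N+1)}{N+3}$ and the second for $\frac{2(N+1)}{N+3}\le p<2$; summing over the dyadic pieces and the two energy ranges yields the two displayed inequalities.

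The step I expect to be the main obstacle is the high-energy analysis, in two respects. First, one needs the oscillatory, symbolic description of $\Phi_\lambda$ used above, uniformly in $\lambda\ge1$ and $r$: this is essentially a parametrix construction for $L-\lambda^2$ on a space that is Euclidean at small scales but exponentially growing at large scales, and it is precisely here that the results of~\cite{CH} are needed. Second, one must glue the Stein--Tomas mechanism governing the Euclidean scales to the Kunze--Stein mechanism governing the large scales so that the bookkeeping of the two contributions produces $\sigma_p$ with a continuous transition at $p=\frac{2(N+1)}{N+3}$. By comparison the low-energy bound --- and the subsequent passage from this spectral measure estimate to the resolvent bounds and the Limiting Absorption Principle used in the remainder of the paper --- is routine.
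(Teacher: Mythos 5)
The theorem is quoted verbatim from~\cite{CH}, and the paper offers no proof of its own; there is therefore no in-paper argument to compare against. Your sketch is nonetheless a reasonable account of what such a proof looks like when specialised to the model space $\Hy^N$, and it is in fact a somewhat different and more elementary route than the one in~\cite{CH}. Chen and Hassell work on general asymptotically hyperbolic manifolds, which have no transitive isometry group, hence no Kunze--Stein phenomenon and no explicit spherical functions; their argument builds a microlocal parametrix for the spectral measure and extracts the oscillatory kernel bounds from it. On $\Hy^N$ itself, the Harish--Chandra $c$-function and $\varphi_\lambda$ asymptotics you invoke are available in closed symbolic form, and the low-energy and large-distance regimes can be handled by Kunze--Stein exactly as you say; this is the classical route and it does reproduce the same two exponents with the same threshold $p=\frac{2(N+1)}{N+3}$. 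What the microlocal approach buys is precisely the uniformity you flag as the main obstacle: a symbol-type, $\lambda$-uniform oscillatory description of the kernel on a manifold that is only asymptotically, not exactly, hyperbolic.

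One concrete remark on the exponent. Your Euclidean-scale contribution $\lambda^{\frac{2N}{p}-N-1}$ is the one that agrees with the second formula $\frac{N-1}{p}-\frac{N-1}{2}$ at $p=\frac{2(N+1)}{N+3}$, and it is consistent with the $L^1\to L^\infty$ diagonal size $\asymp\lambda^{N-1}$ at $p=1$. The displayed value $\sigma_p=\frac{2N}{p}+N-1$ in the first range of the statement therefore looks like a sign typo in the paper (it would give $3N-1$ at $p=1$ and would not match the second branch at the threshold). Your version of the exponent is the correct one; aside from this the sketch is sound in outline.
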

 
 As we will show below, this estimate may be used in order to prove the resolvent estimates along the
 lines of~\cite{HS}. Before going on with this, we recall some useful information about the Green's function 
  associated with the operator $L-(\lambda+i\mu)^2$ given by $(x,y)\mapsto
 G_{\lambda+i\mu}(d(x,y))$. In other words, for all $f\in C_0^\infty(\Hy^N)$, we have 
 $$
   (L-(\lambda+i\mu)^2)^{-1} f 
   = G_{\lambda+i\mu}\ast f
   := \int_{\Hy^N} G_{\lambda+i\mu}(d(x,y))f(y)\,dV(y).
 $$ 
For notational convenience, we will in the following assume $\mu>0$. It is known (\cite{Tay_PDEsII} p.125)
that there are complex constants $c_N\neq 0$ such that, for odd space dimensions $N$, we have
\begin{equation}
\label{greenodd}
G_{\lambda+i\mu}(t)
  =   \dfrac{c_N }{i\lambda -\mu}\left(\dfrac{1}{\sinh t} \dfrac{\partial}{\partial
t}\right)^{\frac{N-1}{2}} \Big[e^{(i\lambda-\mu) t}\Big],
\end{equation}
whereas in the case of even $N$ we have
\begin{equation}
\label{greeneven}
G_{\lambda+i\mu}(t) 
=      \dfrac{c_N }{i\lambda -\mu} \int_t^\infty \dfrac{\sinh s}{\sqrt{\cosh s -
\cosh t}} \left(\dfrac{1}{\sinh s} \dfrac{\partial}{\partial s}\right)^{\frac{N}{2}} \Big[e^{(i\lambda - \mu)s}\Big]\,ds.
\end{equation}
  The properties of the Green's function are summarized in the following proposition.

 \begin{prop}\label{prop:GreenFunctionH}
   Let $N\in\N,N\geq 2$ and let $G_{\lambda+i\mu}$ be given by \eqref{greenodd} for odd $N$ and
   by \eqref{greeneven} for even $N$. Then for all $\Lambda>0$ there is a constant $C>0$ such that  
   \begin{align*} 
	  |G_{\lambda+i\mu}(t)|&\leq C\max\{t^{2-N},|\log(t)|\} &&(|t|\leq 1), \\
	  |G_{\lambda+i\mu}(t)|&\leq Ce^{(\frac{1-N}{2}-|\mu|) t}   &&(|t|\geq 1)
   \end{align*} 
   for all $\lambda\in [\Lambda^{-1},\Lambda]$ and $\mu\in [-\Lambda,\Lambda]\sm\{0\}$. 
 \end{prop}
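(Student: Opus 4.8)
We want to estimate the Green's function $G_{\lambda+i\mu}(t)$ for the operator $L-(\lambda+i\mu)^2$, both near $t=0$ and for large $t$, uniformly for $\lambda,\mu$ in compact sets avoiding $\lambda=0$. The natural approach is to work directly from the explicit formulas \eqref{greenodd} and \eqref{greeneven}, treating the odd- and even-dimensional cases separately. In both cases the key is to understand the effect of repeatedly applying the operator $\frac{1}{\sinh s}\partial_s$ to the exponential $e^{(i\lambda-\mu)s}$.

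\medskip

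\textbf{Odd dimensions.} For odd $N$, I would expand $\left(\frac{1}{\sinh t}\partial_t\right)^{(N-1)/2}[e^{(i\lambda-\mu)t}]$ by induction. Each application of $\frac{1}{\sinh t}\partial_t$ either differentiates the exponential (producing a factor $i\lambda-\mu$, which is bounded on our compact parameter set) or differentiates the accumulated rational function of $\cosh t,\sinh t$. One shows by induction that
$$
\left(\frac{1}{\sinh t}\partial_t\right)^{k}[e^{(i\lambda-\mu)t}] = e^{(i\lambda-\mu)t}\sum_{j} \frac{p_j(\cosh t)}{\sinh^{2k-j}t}(i\lambda-\mu)^{j}
$$
for suitable polynomials $p_j$ of controlled degree. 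For $|t|\le 1$ one then has $|\sinh t|\sim |t|$ and $|\cosh t|\sim 1$, so the worst term behaves like $|t|^{-(N-1)}\cdot |t|$ from the leading $\sinh^{-(N-1)}$ (after prefactor $c_N/(i\lambda-\mu)$), but more carefully the lowest-order-in-$t$ behaviour is $t^{2-N}$ when $N\ge 3$ and $|\log t|$ when — actually $N=3$ gives $t^{-1}$ and we never hit the log in odd dimensions; the $|\log t|$ branch is only relevant for even $N$, so for odd $N\ge 3$ the bound $t^{2-N}$ suffices and for $N=2$ see below. For $|t|\ge 1$, $\cosh t\sim\sinh t\sim e^t/2$, every negative power of $\sinh t$ and every polynomial in $\cosh t$ combine with the prefactor $e^{-\mu t}$ from $|e^{(i\lambda-\mu)t}|$; the net exponential rate is $e^{-\mu t}\cdot e^{-(N-1)t/2+\ldots}$, and a direct count of the polynomial degrees shows the dominant rate is exactly $e^{(\frac{1-N}{2}-\mu)t}$.

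\medskip

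\textbf{Even dimensions.} Here one first applies the odd-case analysis to the inner expression $\left(\frac{1}{\sinh s}\partial_s\right)^{N/2}[e^{(i\lambda-\mu)s}]$, obtaining the same type of bound, then integrates against the kernel $\frac{\sinh s}{\sqrt{\cosh s-\cosh t}}$ over $s\in[t,\infty)$. For $|t|\ge 1$ the integrand is, up to polynomial factors, bounded by $e^{(\frac{1-N}{2}-\mu)s}$ times $\frac{\sinh s}{\sqrt{\cosh s-\cosh t}}$; splitting the integral at $s=t+1$ and $s=2t$ and using $\cosh s-\cosh t\gtrsim e^{s}e^{-t}$ for $s\ge t+1$ handles the tail, while the piece $s\in[t,t+1]$ is an integrable singularity of square-root type (substitute $u=\cosh s-\cosh t$) contributing $e^{(\frac{1-N}{2}-\mu)t}$ times a bounded factor. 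For $|t|\le 1$ one does the same splitting near $s=t$: the small-$s$ region $s\in[t,1]$ produces, after the square-root substitution, the singular behaviour which works out to $t^{2-N}$ for $N\ge 4$ and exactly $|\log t|$ when $N=2$ (where the inner operator is applied once, giving $\frac{1}{\sinh s}\partial_s e^{(i\lambda-\mu)s}=\frac{i\lambda-\mu}{\sinh s}e^{(i\lambda-\mu)s}\sim s^{-1}$, and the integral $\int_t^1 \frac{\sinh s}{\sqrt{\cosh s-\cosh t}}\cdot s^{-1}\,ds$ is logarithmically divergent in $t$), while the region $s\ge 1$ contributes a bounded term absorbed into the maximum.

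\medskip

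\textbf{Main obstacle.} The routine part is the induction in the odd case and the large-$t$ exponential bookkeeping; these are just careful Leibniz-rule computations. The delicate point is the even-dimensional small-$t$ analysis: one must extract the precise singularity $\max\{t^{2-N},|\log t|\}$ from the composition of the $(N/2)$-fold singular differentiation and the square-root-singular integral, keeping all constants uniform in $\lambda\in[\Lambda^{-1},\Lambda]$ (the factor $1/(i\lambda-\mu)$ in front is bounded by $\Lambda$ since $\lambda\ge\Lambda^{-1}>0$, which is exactly why we exclude $\lambda=0$) and in $\mu$. I would handle this by the substitution $u=\cosh s-\cosh t$, which turns the relevant integral into $\int_0^{c}\frac{\phi(u,t)}{\sqrt{u}}\,du$ with $\phi$ a rational expression in $\cosh t=1+\ldots$, and then Taylor-expand near $t=0$; the leading divergence is read off directly, and the uniformity in the parameters is manifest because $i\lambda-\mu$ enters only polynomially after the prefactor is accounted for.
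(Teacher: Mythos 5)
The paper itself gives no proof of Proposition~\ref{prop:GreenFunctionH}: after citing the explicit formulas \eqref{greenodd}--\eqref{greeneven} from Taylor's book, it simply states the estimates and moves on. So there is no ``paper's proof'' to compare against, and your task was genuinely to supply one. Your approach---induction on the repeated operator $\frac{1}{\sinh t}\partial_t$ for odd $N$, then feeding that into the Abel-type integral with kernel $\frac{\sinh s}{\sqrt{\cosh s-\cosh t}}$ for even $N$---is the natural, and essentially the only elementary, route, and the structure you lay out is correct.

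A few remarks on where care is needed, which you partly anticipate. Your inductive ansatz produces denominators $\sinh^{m}t$ with $m$ ranging between $k$ and $2k-1$ after $k$ applications (the term surviving with $m=k$ carries the factor $(i\lambda-\mu)^k$, the term with $m=2k-1$ carries a single power of $(i\lambda-\mu)$). For odd $N$, $k=\frac{N-1}{2}$ gives a worst singularity $t^{-(2k-1)}=t^{2-N}$ and slowest large-$t$ decay $e^{-kt}=e^{(1-N)t/2}$; for even $N$, $k=\frac{N}{2}$ gives $s^{-(N-1)}$ near $0$, and the $u=s/t$ (or $u=\cosh s-\cosh t$) substitution yields $t^{2-N}$ for $N\geq 4$ and $|\log t|$ for $N=2$ --- exactly as you state. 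The tail of the even-$N$ integral over $s\geq t+1$ needs $\cosh s-\cosh t\gtrsim e^{s}$ there, which holds with a $t$-independent constant, and the near-diagonal piece $s\in[t,t+1]$ is a bounded square-root singularity contributing the factor $e^{t/2}$; combined with the inner decay $e^{(-\mu-N/2)s}$ this gives the claimed $e^{((1-N)/2-\mu)t}$. Uniformity follows since $|i\lambda-\mu|\in[\Lambda^{-1},\sqrt 2\,\Lambda]$ on the given parameter set, so both the prefactor $1/(i\lambda-\mu)$ and all polynomial factors $(i\lambda-\mu)^j$ are controlled.

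One genuine gap you leave unaddressed: for $\mu<0$, the formulas \eqref{greenodd}--\eqref{greeneven} literally contain $e^{(i\lambda-\mu)t}=e^{i\lambda t}e^{|\mu|t}$, which \emph{grows}, and your exponential bookkeeping would then yield $e^{(|\mu|+(1-N)/2)t}$ rather than $e^{(-|\mu|+(1-N)/2)t}$, contradicting the claim. The resolution, consistent with the paper's sentence ``we will in the following assume $\mu>0$,'' is that the displayed formulas are only valid for $\mu>0$, and for $\mu<0$ one uses the self-adjointness of $L$ to write $(L-(\lambda+i\mu)^2)^{-1}=\overline{(L-(\lambda-i\mu)^2)^{-1}}$, hence $|G_{\lambda+i\mu}|=|G_{\lambda+i|\mu|}|$ and the bound with $|\mu|$ follows. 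You should say this explicitly; otherwise the estimate for $|t|\geq 1$ appears false as written for $\mu<0$.
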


\medskip

We shall also use the formula from  (4.4) in~\cite{HS} that allows to write the convolution
$G_{\lambda+i\mu}\ast f$ in a different way. It reads
\begin{equation}\label{eq:formula_resolvent}
  (L - (\lambda +i\mu)^2 )^{-1} f
  = -\frac{1}{i\lambda -\mu} \int_0^\infty e^{(i\lambda -\mu) t}\cos(tP) f\,dt
\end{equation} 
where the function $\cos(tP)$ is defined via functional calculus. Note that this formula is a
consequence of the fact that for any given test function $f$ the function $u(t):=\cos(tP)f$ is the
unique solutions of the initial value problem $\partial_{tt} u + P^2 u=0$, $u(0)=f$, $\partial_t u(0)=0$.
So $L=P^2$ yields
\begin{align*}
  &(L - (\lambda +i\mu)^2) \left( -\frac{1}{i\lambda -\mu} \int_0^\infty e^{(i\lambda -\mu) t}\cos(tP)
  f\,dt \right) \\
  &= -\frac{1}{i\lambda -\mu} \int_0^\infty e^{(i\lambda -\mu) t}  (P^2 - (\lambda +i\mu)^2)u(t)\,dt
  \\
  &= -\frac{1}{i\lambda -\mu} \int_0^\infty e^{(i\lambda -\mu) t}  \left(-\partial_{tt} u(t) + (i\lambda
  -\mu)^2 u(t)\right) \,dt  \\
  &= -\frac{1}{i\lambda -\mu}  \left[ e^{(i\lambda -\mu) t} \big( -\partial_t u(t) + (i\lambda  -\mu)
  u(t)\big) \right]_0^\infty \\
  &= u(0) = f.  
\end{align*}
With these preparations, we can now prove the resolvent estimates for $L$.

\begin{thm}\label{thm:Resolvent_estimates}
Let $N\geq 2$ and $\Lambda>0$. Then there exists a constant $C$ such that 
$$
  \| ( L- (\lambda+i\mu)^2)^{-1}f\|_{L^q(\Hy^N)} \leq C \|f\|_{L^p(\Hy^N)}
$$
for all $\lambda\in [\Lambda^{-1},\Lambda]$ and $\mu\in [-\Lambda,\Lambda]\sm\{0\}$ 
 provided that $1\leq p<2<q$ and $\frac{1}{p}-\frac{1}{q}\leq \frac{2}{N}$ with 
 $(p,q)\neq (1,\frac{N}{N-2}),(\frac{N}{2},\infty)$. 
\end{thm}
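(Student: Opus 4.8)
The plan is to use the representation formula \eqref{eq:formula_resolvent} together with the spectral measure bounds of Theorem~\ref{thm:CH}, following the strategy of~\cite{HS} but exploiting the non-uniform (in $\lambda$) bounds available here. Writing $P=\sqrt{L}$ and using the spectral theorem, we have $\cos(tP)f = \int_0^\infty \cos(t\lambda')\,dE_P(\lambda')f$, so that
\begin{equation*}
  (L-(\lambda+i\mu)^2)^{-1}f
  = -\frac{1}{i\lambda-\mu}\int_0^\infty e^{(i\lambda-\mu)t}\int_0^\infty \cos(t\lambda')\,dE_P(\lambda')f\,dt.
\end{equation*}
Performing the $t$-integration (which converges absolutely for $\mu\neq 0$ since $E_P$ is supported on $[0,\infty)$) produces a multiplier of the form $m_{\lambda+i\mu}(\lambda') = \big((\lambda')^2-(\lambda+i\mu)^2\big)^{-1}$ acting on the spectral resolution, i.e. $(L-(\lambda+i\mu)^2)^{-1}f = \int_0^\infty m_{\lambda+i\mu}(\lambda')\,dE_P(\lambda')f$, as expected. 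The point is to estimate this integral in the $L^p\to L^q$ operator norm by splitting $[0,\infty)$ into the region near the singularity $\lambda'\approx\lambda$, the region $\lambda'$ bounded away from $\lambda$ but $\lesssim 1$, and the region $\lambda'\geq 1$.

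\textbf{Step 1: the low/intermediate frequency part.} On $\{\lambda'\leq 2\Lambda\}$ away from $\lambda'=\lambda$ the multiplier $|m_{\lambda+i\mu}(\lambda')|$ is bounded (uniformly for $\lambda\in[\Lambda^{-1},\Lambda]$), while near the pole it behaves like $|\lambda'-\lambda|^{-1}$ up to the regularization by $\mu$; the first bound of Theorem~\ref{thm:CH} gives $\|\frac{d}{d\lambda'}E_P(\lambda')\|_{L^p\to L^{p'}}\leq C(\lambda')^2$. Interpolating the $L^p\to L^{p'}$ bound with the trivial $L^2\to L^2$ bound (using $\|\frac{d}{d\lambda'}E_P(\lambda')\|_{L^2\to L^2}$ understood in the sense of the spectral measure) and integrating against $m_{\lambda+i\mu}$ over this compact range yields a bound of the form $C\|f\|_{L^p}\to\|\cdot\|_{L^q}$; here one uses $q>2>p$ so that $L^q$ respectively $L^p$ interpolate between $L^2$ and $L^{p'}$ respectively $L^p$, and the local integrability $\int_0^{2\Lambda}|m_{\lambda+i\mu}(\lambda')|(\lambda')^\alpha\,d\lambda'$ is finite uniformly in $\mu$ because the $1/|\lambda'-\lambda|$ singularity is integrable. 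This is where the excluded endpoint $(p,q)=(1,N/(N-2))$ and the constraint $\frac1p-\frac1q\leq\frac2N$ enter: they guarantee that the interpolation keeping $L^p\to L^q$ does not fall outside the admissible range $1\le p<2<q$ with the correct gap.

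\textbf{Step 2: the high frequency part.} On $\{\lambda'\geq 2\Lambda\}$ the multiplier decays like $|m_{\lambda+i\mu}(\lambda')|\leq C(\lambda')^{-2}$, and the second bound of Theorem~\ref{thm:CH} gives growth $(\lambda')^{\sigma_p}$. Combined with an analogous bound with $p$ replaced by the exponent dual to $q$ (so that one can split $L^p\to L^q$ as $L^p\to L^{p'}$ composed appropriately, or more precisely interpolate the two endpoint spectral bounds with the $L^2\to L^2$ bound $\sim (\lambda')^{-1}$-type decay coming from $\|dE_P\|$), the relevant integral $\int_{2\Lambda}^\infty (\lambda')^{-2}(\lambda')^{\sigma_p}\cdots\,d\lambda'$ converges precisely when $\sigma_p<1$, i.e. on the stated range of $p$; the excluded endpoint $(p,q)=(N/2,\infty)$ corresponds to the borderline case $\sigma_p=1$ (or the failure of the $L^\infty$ endpoint). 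One has to be a little careful to interpolate so that the target is $L^q$ and the source $L^p$ with the gap $\frac1p-\frac1q\leq\frac2N$; a $TT^*$-type symmetrization or a bilinear duality argument pairing against $g\in L^{q'}$ handles this cleanly.

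\textbf{Main obstacle.} The delicate point is Step~1: controlling the contribution of the spectral measure near the singular frequency $\lambda'=\lambda$ uniformly as $\mu\to0^\pm$, in an $L^p\to L^q$ norm with $p<2<q$ rather than in the weighted-$L^2$ (or $L^p\to L^{p'}$) setting where Theorem~\ref{thm:CH} lives directly. The resolution is to observe that the $\mu$-regularized pole $m_{\lambda+i\mu}$ is, up to bounded multiples, an $L^1$-in-$\lambda'$ kernel on compact frequency ranges with norm bounded independently of $\mu$, so that after the interpolation reducing $L^p\to L^q$ to a combination of $L^p\to L^{p'}$ and $L^2\to L^2$ one simply integrates the (polynomially bounded) spectral-measure norm against this $L^1$ kernel. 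The second, more bookkeeping-heavy obstacle is verifying that the interpolation between the two endpoint estimates in Theorem~\ref{thm:CH} and the trivial $L^2$ estimate actually covers the full claimed region $\{1\le p<2<q,\ \frac1p-\frac1q\le\frac2N\}\setminus\{(1,\tfrac{N}{N-2}),(\tfrac N2,\infty)\}$ and no more; this is a convexity argument in the $(1/p,1/q)$-square, and the two excluded points are exactly the vertices where either the low-frequency integrability (Step 1) or the high-frequency integrability $\sigma_p<1$ (Step 2) degenerates.
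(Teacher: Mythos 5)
Your proposal takes a genuinely different route from the paper — a direct estimate of the spectral multiplier $\lambda' \mapsto ((\lambda')^2-(\lambda+i\mu)^2)^{-1}$ integrated against $dE_P$, rather than a dyadic decomposition of the Green's function — but as written it contains a gap that is not merely bookkeeping.

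The central difficulty is how to pass from the Chen--Hassell bound $\|\tfrac{d}{d\lambda'}E_P(\lambda')\|_{L^p\to L^{p'}}$ to an $L^p\to L^q$ estimate with $q\ne p'$. Riesz--Thorin interpolation between $L^p\to L^{p'}$ and $L^2\to L^2$ only moves you along the duality diagonal $\{(p_\theta, p_\theta')\}$; it never produces an off-diagonal target, so the phrase ``$L^q$ respectively $L^p$ interpolate between $L^2$ and $L^{p'}$ respectively $L^p$'' in Step~1 does not describe a valid interpolation. The clean version of what you sketch is the bilinear/$TT^*$ inequality $|\skp{dE_P(\lambda')f}{g}|\le \|dE_P(\lambda')\|_{L^p\to L^{p'}}^{1/2}\,\|dE_P(\lambda')\|_{L^{q'}\to L^q}^{1/2}\,\|f\|_{L^p}\|g\|_{L^{q'}}$, obtained by writing $dE_P=\mathcal F_0^*\mathcal F_0$. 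But even with this fixed, the high-frequency integral $\int_1^\infty |m_{\lambda+i\mu}(\lambda')|\,(\lambda')^{(\sigma_p+\sigma_{q'})/2}\,d\lambda'$ diverges on much of the claimed range. For instance with $N=3$, $p=1.1$, $q=3$ one has $1\le p<2<q$ and $\frac1p-\frac1q<\frac2N$, yet $p<\frac{2(N+1)}{N+3}$ puts $\sigma_p=\frac{2N}{p}+N-1\approx 7.5$, $\sigma_{q'}=\frac13$, and $(\sigma_p+\sigma_{q'})/2-2\approx 1.9>-1$, so the integral diverges badly. The paper avoids precisely this obstruction: Chen--Hassell is used only for the $L^2\to L^r$ (i.e.\ $L^{r'}\to L^2$, $r'\in(1,2)$) half of the interpolation, while the complementary $L^1\to L^\infty$ estimate \eqref{HSe2} is obtained directly from the exponential pointwise decay of $G_{\lambda+i\mu}$ — a kernel fact that no amount of spectral-measure interpolation reproduces. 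Your scheme has no substitute for that ingredient.

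There are two further inaccuracies. First, the assertion in Step~1 that ``the $1/|\lambda'-\lambda|$ singularity is integrable'' is false for the real part of $m_{\lambda+i\mu}$; the limit as $\mu\to0$ is a principal value, not an absolutely convergent integral, and a uniform-in-$\mu$ bound of the type you invoke would require some Hölder regularity in $\lambda'$ of the spectral-measure norms, which you neither state nor prove. In the paper this issue does not arise because the work is done with the kernel $G_{\lambda+i\mu}$, whose local size $|G_{\lambda+i\mu}(t)|\lesssim\max\{t^{2-N},|\log t|\}$ is controlled uniformly in $\mu$ by Proposition~\ref{prop:GreenFunctionH}. Second, the two excluded pairs $(p,q)=(1,\tfrac N{N-2})$ and $(\tfrac N2,\infty)$ do \emph{not} come from high-frequency convexity or from $\sigma_p=1$: they are the endpoint exclusions of the Weak Young inequality applied to the near-diagonal piece $S_0$ (where $S_0\in L^{N/(N-2),\infty}$ but not $L^{N/(N-2)}$). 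Attributing them to Step~2 puts the obstruction in the wrong place and would mislead a reader into thinking a better high-frequency estimate could recover those endpoints.
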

\begin{proof}
  As above we only discuss the case $\mu>0$. 
  Let $\beta \in C_0^\infty ((1/2 ,2))$  be a nonnegative function  such that  $\sum_{k\in \Z } \beta
  (2^{-k}t)=1$ for all $t>0$ and set
  $$
    \beta_k(t):= \beta(2^{-k}t)\quad (k\geq 1),\qquad 
    \beta_0(t):= 1 -\sum_{k=1}^\infty \beta_k(t).
  $$
  In particular, we have $\beta_0(t)=1$ for $t\in [0,1]$, $\beta_0(t)=0$ for $t\geq 2$ and the
  $\beta_k$ are supported on annuli with inner resp. outer radius $2^{k-1},2^{k+1}$. Recalling
  \eqref{greenodd} and \eqref{greeneven}, we define, for all $k\in\N_0$ and odd space dimensions $N$, the
  function
  \begin{equation*}
  S_k (t) := \dfrac{c_N}{i\lambda -\mu} 
    \left(\dfrac{1}{\sinh t} \dfrac{\partial}{\partial t}\right)^{\frac{N-1}{2}} 
    \Big[ \beta_k (t) e^{(i\lambda - \mu) t }\Big].
\end{equation*}
  For even $N$ the corresponding definition is
\begin{equation*}
S_k (t):= \dfrac{c_N}{i\lambda -\mu} \int_t^\infty \dfrac{\sinh s}{\sqrt{\cosh s - \cosh t}} 
\left(\dfrac{1}{\sinh s} \dfrac{\partial}{\partial s}\right)^{\frac{N}{2}} \Big[\beta_k(s) e^{(i\lambda  -\mu) s }\Big]\,ds.
\end{equation*}
  These definitions and Proposition~\ref{prop:GreenFunctionH} give $\sum_{k=0}^\infty S_k =
  G_{\lambda+i\mu}$ so that we have to estimate the integrals $S_k\ast f$ for $f\in L^p(\Hy^N)$.

  \medskip
  
  We start with the estimates for $S_0\ast f$. By definition of $\beta_0,S_0$ and 
  Proposition~\ref{prop:GreenFunctionH} we have
  $$ 
     S_0(t) =0 \quad\text{for }t\geq 2,\qquad 
     |S_0(t)|\leq C \max\{t^{2-N},|\log(t)|\} \quad\text{for } 0<t\leq 2. 
  $$
  So $S_0 \in L^r(\Hy^N)$ for $1\leq r<\frac{N}{N-2}$ as well as 
  $S_0 \in L^{\frac{N}{N-2},\infty}(\Hy^N)$ if $N\geq 3$. Using the Weak Young Inequality in $\Hy^N$ (see
  Theorem 6.2.3~\cite{Simon_HarmAna} and the following remarks) 
  we obtain  
  $$
    S_0\ast f \in L^q(\Hy^N) \quad\text{if } 1+\frac{1}{q}=\frac{1}{r}+\frac{1}{p},\; 1\leq r<\frac{N}{N-2}
    \text{ or }r=\frac{N}{N-2},1<p,q<\infty. 
  $$
  In other words, we have
  $$
    S_0\ast f \in L^q(\Hy^N) \quad\text{if } 0\leq \frac{1}{p}-\frac{1}{q}< \frac{2}{N}\;\text{ or }\;
    \frac{1}{p}-\frac{1}{q}=\frac{2}{N},\, 1<p,q<\infty.
  $$
  Since these conditions are satisfied by our assumptions on $p,q$, it remains to estimate the integrals
  $S_k\ast f$ for $k\geq 1$.

\medskip

 Next, we are going to show that, for $k\geq 1$,  
\begin{equation*}
 \|S_k \ast f\|_{L^q(\Hy^N ) }\leq C 2^{-k} \|f\|_{L^p(\Hy^N)}.
\end{equation*}
First, we prove the corresponding inequality for $p=1,q=\infty$. In (4.14)~\cite{HS} it is shown
that for any given $M>0$ there is a $C_M>0$ such that
\begin{equation}   \label{HSe2}
 \|S_k\ast f \|_{L^{\infty} (\Hy^N)}
 \leq C_M 2^{-kM}\|f\|_{L^1(\Hy^N)}.
\end{equation}
This is a consequence of the uniform pointwise exponential decay of $G_{\lambda+i\mu}$ at infinity, see Proposition~\ref{prop:GreenFunctionH}.
In order to prove the inequality for all $q>2$ and $p=2$, we make use the formula
$$
  S_k\ast f = \frac{c_N}{i\lambda-\mu} \int_0^\infty \beta_k(t) e^{(i\lambda-\mu) t}
  \cos(tP)f\,dt 
$$
from p.4655~\cite{HS} (which can be proved just as~\eqref{eq:formula_resolvent}). We define
\begin{align} \label{eq:defn_psik} 
  \begin{aligned}
  \psi_k(s)
  &:= \int_\R \beta_k(t) e^{(i\lambda -\mu)t} \cos(ts)\,dt \\
  &= \frac{1}{2} \mathcal F \left(  \beta_k(\cdot) e^{(i\lambda-\mu)\cdot} + \beta_k(-\cdot)
  e^{-(i\lambda-\mu)\cdot} \right)(s)
  \end{aligned} 
\end{align}
where $\mathcal F$ denotes the one-dimensional Fourier transform. 
For any given $r>2$ we choose $M\in\N$ such that $\sigma_{r'}\leq 2M$. We then obtain for all
$k\in\N$
 \begin{align*}
  &\|S_k\ast f\|_{L^2(\Hy^N)}^2 \\
  &= \Big\|\int_\R \beta_k(t) e^{(i\lambda -\mu)t} \cos(tP)\,dt  f\Big\|_{L^2(\Hy^N)}^2\\
  &=  \|\psi_k(P)  f\|_{L^2(\Hy^N)}^2\\
  &= \int_\R |\psi_k(s)|^2 \,d\skp{E_P(s)f}{f} \\
  &\leq  \int_\R |\psi_k(s)|^2 \|\dfrac{d}{ds} E_P(s)f\|_{L^r(\Hy^N)} \|f\|_{L^{r'}(\Hy^N)}  \,ds\\
  &\leq C \int_0^\infty |\psi_k(s)|^2 \left(s^2 1_{[0,1]}(s) + s^{\sigma_{r'}}1_{[1,\infty)}(s)\right)
  \|f\|_{L^{r'}(\Hy^N)}^2 \,ds  \\
  &\leq C \|f\|_{L^{r'}(\Hy^N)}^2 \cdot \int_0^\infty |\psi_k(s)|^2 (s^2+\ldots+s^{2M}) \,ds   
  \\
  &= C \|f\|_{L^{r'}(\Hy^N)}^2 \cdot  \int_\R   |(\mathcal F^{-1}\psi_k)'(s)|^2 + \ldots 
  + |(\mathcal F^{-1}\psi_k)^{(M)}(s)|^2\,ds  \\
   &\stackrel{\eqref{eq:defn_psik}}{=} 
    C \|f\|_{L^{r'}(\Hy^N)}^2 \cdot  \int_0^\infty 
   \Big|\frac{d}{ds}\left(\beta(2^{-k}s) e^{(i\lambda-\mu)s}\right) + \ldots
   + \frac{d^M}{ds^M}\left(\beta(2^{-k}s) e^{(i\lambda-\mu)s}\right)  \Big|^2 \,ds  \\
   &\leq  C\|f\|_{L^{r'}(\Hy^N)}^2 \cdot   \int_0^\infty \left(\beta(2^{-k}s)^2+\ldots+\beta^{(M)}(2^{-k}s)^2
   \right) e^{- 2\mu s} \,ds  \\
   &\leq C 2^k \|\beta\|_{H^M(\R)}^2 \|f\|_{L^{r'}(\Hy^N)}^2.  
\end{align*}
Taking the square root of this estimate we obtain by duality (note that $f\mapsto S_k\ast f$ is symmetric)
\begin{equation} \label{HSe1}
  \|S_k \ast f\|_{L^r (\Hy^N )} \leq C 2^{k/2}\|f\|_{L^2(\Hy^N)}.
\end{equation}
(This is an improved version of (4.13) in~\cite{HS}.) 

\medskip

Interpolating now \eqref{HSe1} and \eqref{HSe2} we get
$$
  \|S_k \ast f\|_{L^q(\Hy^N)}\leq C 2^{k(1-\frac{1}{p} + M(1-\frac{2}{p}))} \|f\|_{L^p(\Hy^N)}
  \qquad q=\frac{rp}{2(p-1)},r>2.
$$
So for any given $p\in (1,2),q>\frac{p}{p-1}$ we can choose $r>2$ as in the previous line and take $M$
sufficiently large to get 
$$
  \|S_k \ast f \|_{L^q(\Hy^N)}\leq C 2^{-k}\|f\|_{L^p(\Hy^N)},\quad\text{provided } 
  p\in (1,2),\; q>\frac{p}{p-1}>2.
$$
The dual version of this is
$$
  \|S_k \ast f \|_{L^q (\Hy^N)}\leq C 2^{-k}\|f\|_{L^p(\Hy^N)},\quad\text{provided }
  2<q<\frac{p}{p-1}<\infty
$$
and interpolating both yields  
$$
  \|S_k \ast f \|_{L^q(\Hy^N)}\leq C 2^{-k}\|f\|_{L^p(\Hy^N)},\quad\text{provided }
  p\in (1,2),\; q>2. 
$$
 The remaining case $p=1,q>2$ may again be obtained by interpolation with \eqref{HSe2} and we are done.
\end{proof}
  
   Having established locally uniform bounds and taking into account
   Theorem~I.4.2~\cite{IsoKur_Introduction} we may define
  \begin{align}\label{eq:defnRE}
    \mathcal R_\lambda + i \mathcal E_\lambda
    := (L-\lambda^2-i0)^{-1} 
    := \lim_{\mu\to 0^+} (L- (\lambda_\mu+i\mu)^2)^{-1} 
  \end{align}
  where $\lambda_\mu := \sqrt{\lambda^2+\mu^2}$ as bounded linear operators on Lebegue spaces in $\Hy^N$. The
  main properties of these operators are summarized in the following result.

 \begin{cor}\label{cor:LAP}
   The operators $\mathcal R_\lambda,\mathcal E_\lambda:L^p(\Hy^N)\to L^q(\Hy^N)$ are linear and
   bounded provided that $1\leq p<2<q$ and $\frac{1}{p}-\frac{1}{q}\leq \frac{2}{N}$ with $(p,q)\neq
   (1,\frac{N}{N-2}),(\frac{N}{2},\infty)$. Moreover, we have the representation formula 
   $\mathcal R_\lambda f= G\ast  f$
   for all $f\in C_0^\infty(\Hy^N)$ where the Green's function $G(t):=\lim_{\mu\to 0^+}
   \Real(G_{\lambda+i\mu}(t))$ satisfies
   \begin{align} \label{eq:estimate_G}
     \begin{aligned}
	  |G(t)| &\leq C\max\{|t|^{2-N},|\log(t)|\} &&(|t|\leq 1),\\ 
	  |G(t)| &\leq C e^{(1-N)t/2} &&(|t|\geq 1)
	  \end{aligned}
   \end{align}
    For all $f\in L^p(\Hy^N)$ the function $\mathcal R_\lambda f$ is a strong solution of
    $L\phi-\lambda^2\phi=f$ in~$\Hy^N$ and $\mathcal E_\lambda f$ solves $L\psi-\lambda^2\psi=0$ in~$\Hy^N$.
    Finally, we have the identities
\begin{align} \label{eq:identityRlambda}
  \begin{aligned}
  \int_{\Hy^N} f (\mathcal E_\lambda g)\,dV 
  =  \frac{\pi}{2\lambda}  \skp{f}{A_\lambda g},
  \qquad
  \int_{\Hy^N} f (\mathcal R_\lambda g)\,dV  
  = \text{p.v.} \int_\R \frac{\skp{f}{A_s g}}{s^2-\lambda^2} \,ds
  \end{aligned} 
\end{align}
  for all $f,g\in L^p(\Hy^N)$ with $1\leq p<2$ where $A_\lambda:L^p(\Hy^N)\to L^{p'}(\Hy^N)$ is given by
  \begin{equation}\label{eq:formula_spectraldensity}
    A_\lambda
    := \frac{d}{d\lambda} E_P(\lambda) 
    =   (\mathcal F_0(\lambda)^{(+)})^*\mathcal F_0(\lambda)^{(+)}
    =  (\mathcal F_0(\lambda)^{(-)})^*\mathcal F_0(\lambda)^{(-)}
  \end{equation}
  for the bounded linear operators $\mathcal F_0^{(\pm)}(\lambda):L^p(\Hy^N)\to L^2(\R^{N-1})$ and 
  $\mathcal F_0^{(\pm)}(\lambda)^*:L^2(\R^{N-1})\to L^{p'}(\Hy^N)$ defined in
  (I.4.2),(I.4.10)~\cite{IsoKur_Introduction}.
 \end{cor}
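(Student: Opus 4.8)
The plan is to assemble the statement from three available ingredients: the locally uniform resolvent bounds of Theorem~\ref{thm:Resolvent_estimates}, the Limiting Absorption Principle of Isozaki and Kurata (Theorem~I.4.2~\cite{IsoKur_Introduction}), and the spectral density estimate of Chen and Hassell (Theorem~\ref{thm:CH}). First I would establish boundedness of $\mathcal R_\lambda,\mathcal E_\lambda$ together with the Green's function representation. From the explicit formulas~\eqref{greenodd}, \eqref{greeneven} the map $\mu\mapsto G_{\lambda+i\mu}(t)$ is analytic near $\mu=0$ for fixed $t>0$, so $G_\lambda(t):=\lim_{\mu\to0^+}G_{\lambda+i\mu}(t)$ exists pointwise and, by Proposition~\ref{prop:GreenFunctionH}, satisfies the bounds~\eqref{eq:estimate_G}. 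For $f\in C_0^\infty(\Hy^N)$ dominated convergence (with the $\mu$-uniform bounds of Proposition~\ref{prop:GreenFunctionH} as dominating function) gives $(L-(\lambda_\mu+i\mu)^2)^{-1}f=G_{\lambda+i\mu}\ast f\to G_\lambda\ast f$ pointwise; writing $G_\lambda=G+i\widetilde G$ with $G=\Real G_\lambda$ and $\widetilde G=\Imag G_\lambda$, this identifies the limit~\eqref{eq:defnRE} with $f\mapsto G\ast f+i\,\widetilde G\ast f$ on $C_0^\infty(\Hy^N)$, and by Theorem~\ref{thm:Resolvent_estimates} together with Fatou's lemma the uniform bound $\|\cdot\|_{L^q(\Hy^N)}\le C\|\cdot\|_{L^p(\Hy^N)}$ passes to this limit. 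Extending by density, $\mathcal R_\lambda f=G\ast f$ and $\mathcal E_\lambda f=\widetilde G\ast f$ are bounded $L^p(\Hy^N)\to L^q(\Hy^N)$ in the asserted range.

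Next I would identify the equations. For $f\in C_0^\infty(\Hy^N)$ and $\mu\neq0$ one has $Lu_\mu=f+(\lambda_\mu+i\mu)^2u_\mu$ in $\mathcal D'(\Hy^N)$ with $u_\mu:=(L-(\lambda_\mu+i\mu)^2)^{-1}f$. Since $u_\mu\to u:=\mathcal R_\lambda f+i\mathcal E_\lambda f$ in $L^q(\Hy^N)$ and $\lambda_\mu\to\lambda$ as $\mu\to0^+$, passing to the limit yields $Lu-\lambda^2u=f$ in $\mathcal D'(\Hy^N)$; interior elliptic regularity for the smooth elliptic operator $L$ then gives $u\in W^{2,q}_{loc}(\Hy^N)$, so $u$ is a strong solution. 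Separating real and imaginary parts for real-valued $f$, and then using linearity and density for arbitrary $f\in L^p(\Hy^N)$, shows that $\mathcal R_\lambda f$ solves $L\phi-\lambda^2\phi=f$ and $\mathcal E_\lambda f$ solves $L\psi-\lambda^2\psi=0$.

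For the spectral identities~\eqref{eq:identityRlambda} and the factorization~\eqref{eq:formula_spectraldensity} I would invoke Stone's formula for $P=\sqrt L$ together with Theorem~\ref{thm:CH}. The latter shows that, paired with $f,g\in L^p(\Hy^N)$, the spectral family $E_P$ is absolutely continuous with density $s\mapsto A_s=\frac{d}{ds}E_P(s)$ bounded $L^p(\Hy^N)\to L^{p'}(\Hy^N)$, so the right-hand sides of~\eqref{eq:identityRlambda} make sense (at least for $f,g$ in a dense class such as $C_0^\infty(\Hy^N)$, where $s\mapsto\skp{f}{A_sg}$ decays fast enough for absolute convergence). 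With $z_\mu:=(\lambda_\mu+i\mu)^2=\lambda^2+2i\lambda_\mu\mu$ and $(L-z_\mu)^{-1}=\int_0^\infty(s^2-z_\mu)^{-1}\,dE_P(s)$ one gets
\[
  \skp{f}{(L-z_\mu)^{-1}g}=\int_0^\infty\frac{\skp{f}{A_sg}}{s^2-\lambda^2-2i\lambda_\mu\mu}\,ds;
\]
as $\mu\to0^+$ the left-hand side tends to $\skp{f}{\mathcal R_\lambda g}+i\skp{f}{\mathcal E_\lambda g}$, while Sokhotski--Plemelj gives $(s^2-\lambda^2-i0)^{-1}=\text{p.v.}\,(s^2-\lambda^2)^{-1}+i\pi\,\delta(s^2-\lambda^2)$ and the change of variables $u=s^2$ gives $\int_0^\infty\phi(s)\,\delta(s^2-\lambda^2)\,ds=\phi(\lambda)/(2\lambda)$; comparing real and imaginary parts produces the two identities. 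The factorization $A_\lambda=(\mathcal F_0^{(\pm)}(\lambda))^*\mathcal F_0^{(\pm)}(\lambda)$, where $\mathcal F_0^{(\pm)}(\lambda)$ are the boundary value operators of~(I.4.2),(I.4.10)~\cite{IsoKur_Introduction}, is the generalized eigenfunction expansion of~\cite{IsoKur_Introduction}; the bounds $\mathcal F_0^{(\pm)}(\lambda):L^p(\Hy^N)\to L^2(\R^{N-1})$ follow from $\|\mathcal F_0^{(\pm)}(\lambda)f\|_{L^2(\R^{N-1})}^2=\skp{f}{A_\lambda f}\le\|f\|_{L^p(\Hy^N)}\|A_\lambda f\|_{L^{p'}(\Hy^N)}\le C\|f\|_{L^p(\Hy^N)}^2$ by Theorem~\ref{thm:CH}, and the adjoint bounds by duality.

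The delicate point is the handling of the limit $\mu\to0^+$: one has to make sure that $\mathcal R_\lambda f$ genuinely equals the convolution against the \emph{limit} kernel $G$ (and not merely a weak limit), and in the Sokhotski--Plemelj step one must keep track of the $\mu$-dependence hidden in $\lambda_\mu$, carry out the change of variables $u=s^2$ that produces the factor $1/(2\lambda)$, and face the merely conditional convergence of the principal value integral for $p$ near $1$ (where $\skp{f}{A_sg}$ may grow in $s$ for general $f,g\in L^p(\Hy^N)$), which is why those identities are best proved first on a dense subspace. A secondary point requiring care is the precise matching of the operators $\mathcal F_0^{(\pm)}(\lambda)$ with the boundary value maps constructed by Isozaki and Kurata.
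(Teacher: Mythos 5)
Your proof is correct and uses the same ingredients as the paper (Theorem~\ref{thm:Resolvent_estimates}, Proposition~\ref{prop:GreenFunctionH}, Theorem~\ref{thm:CH}, and the Isozaki--Kurata boundary operators of~\cite{IsoKur_Introduction}), with one stylistic difference worth pointing out. For~\eqref{eq:identityRlambda} you apply the spectral theorem for $L=P^2$ directly, writing $\skp{f}{(L-z_\mu)^{-1}g}=\int_0^\infty (s^2-z_\mu)^{-1}\skp{f}{A_sg}\,ds$ with $z_\mu=\lambda^2+2i\lambda_\mu\mu$ and then invoking Sokhotski--Plemelj together with the substitution $u=s^2$; the paper reaches the same $s$-integral by the detour of inserting the spectral expansion of $\cos(tP)$ into the wave-operator representation~\eqref{eq:formula_resolvent}, computing the $t$-integral $\int_0^\infty e^{(i\lambda_\mu-\mu)t}\cos(ts)\,dt$ explicitly, and then carrying out the principal-value split by hand. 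Your route is shorter, while the paper's has the organisational merit of reusing~\eqref{eq:formula_resolvent}, which is already the central formula in the proof of Theorem~\ref{thm:Resolvent_estimates}. You also explicitly verify that $\mathcal R_\lambda f$ and $\mathcal E_\lambda f$ solve the stated PDEs via a distributional limit and elliptic regularity, a step the paper treats as understood, and you bound $\mathcal F_0^{(\pm)}(\lambda)$ directly from Theorem~\ref{thm:CH}, whereas the paper deduces it from the boundedness of $\mathcal E_\lambda$ through~\eqref{eq:identityRlambda}; the two are equivalent. Your caution about the growth of $\skp{f}{A_sg}$ in $s$ when $p$ is close to $1$ is well placed and is exactly why the identities are established first for $f,g\in C_0^\infty(\Hy^N)$ and then extended by density, just as in the paper.
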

\begin{proof} 
  The asymptotics of $G$ follows from Proposition~\ref{prop:GreenFunctionH}.
  Since the boundedness of  $\mathcal R_\lambda,\mathcal E_\lambda$ result from
  Theorem~\ref{thm:Resolvent_estimates}, we next prove~\eqref{eq:identityRlambda} for $f,g\in
  C_0^\infty(\Hy^N)$. Let $h(s):= \skp{f}{A_sg}$. Then we have
 \begin{align*}
 \int_{\Hy^N} f (L-\lambda^2-i0)^{-1} g\,dV 
  &= \lim_{\mu\to 0^+}\int_{\Hy^N} f (L-(\lambda_\mu+i\mu)^2)^{-1} g\,dV \\
  &\stackrel{\eqref{eq:formula_resolvent}}{=} \lim_{\mu\to 0^+} \bigskp{f}{
  -\frac{1}{i\lambda_\mu-\mu}\int_0^\infty e^{(i\lambda_\mu-\mu)t} \cos(tP)g\,dt}
  \\
  &= - \frac{1}{i\lambda}  \lim_{\mu\to 0^+}  \bigskp{f}{\int_0^\infty  e^{(i\lambda_\mu-\mu)t}
  \Big(\int_\R \cos(ts)\,dE_P(s)g\Big)\,dt} \\
  &= - \frac{1}{i\lambda}  \lim_{\mu\to 0^+}  \int_\R \Big(\int_0^\infty e^{(i\lambda_\mu-\mu)t}
  \cos(ts) \,dt \Big) \,d\skp{f}{E_P(s)g} \\
  &= - \frac{1}{i\lambda}\lim_{\mu\to 0^+} \int_\R \frac{i(\lambda_\mu+i\mu)}{(\lambda_\mu+i\mu)^2-s^2}
  \,d\skp{f}{E_P(s)g} \\
  &= -  \lim_{\mu\to 0^+} \int_\R \frac{1}{\lambda^2-s^2+2i\mu\lambda_\mu}  \,d\skp{f}{E_P(s)g}  \\
  &= -  \lim_{\mu\to 0^+} \int_\R  \frac{h(s)}{\lambda^2-s^2+2i\mu\lambda_\mu}
  \,ds \\
  &=  \frac{i\pi}{2\lambda}  h(\lambda) - \lim_{\mu\to 0^+} \int_\R
  \frac{h(s)-h(\lambda)}{\lambda^2-s^2+2i\mu\lambda_\mu} \,ds \\
  &=  \frac{i\pi}{2\lambda}  h(\lambda) + \text{p.v.} \int_\R \frac{h(s)}{s^2-\lambda^2} \,ds. 
\end{align*}
 This computation and the definition~\eqref{eq:defnRE} yield~\eqref{eq:identityRlambda} by density of the
 test functions.  Moreover, for all $f\in C_0^\infty(\Hy^N)$, we get from \eqref{eq:identityRlambda} and 
 (I.4.3)~\cite{IsoKur_Introduction} the identity
 \begin{align*}
   \skp{f}{A_\lambda f} 
   = \frac{\lambda}{\pi i} \bigskp{\left( (L-\lambda^2-i0)^{-1} - (L-\lambda^2+i0)^{-1} \right)f}{f}
   = \|\mathcal F_0^{(\pm)}(\lambda)f\|_{L^2(\R^{N-1})}^2
 \end{align*}
 for the operators $\mathcal F_0^{(\pm)}(\lambda)$ defined in (I.4.2)~\cite{IsoKur_Introduction}. Hence, since
 $A_\lambda$ is a symmetric operator, we deduce
 $$
   \skp{f}{A_\lambda g} 
   = \skp{ \mathcal F_0^{(\pm)}(\lambda)f}{\mathcal F_0^{(\pm)}(\lambda)g}_{L^2(\R^{N-1})}
 $$
 for all test functions $f,g$ and~\eqref{eq:formula_spectraldensity} follows. Finally, since $\mathcal
 E_\lambda:L^p(\Hy^N)\to L^{p'}(\Hy^N)$ is bounded, the operators $\mathcal F_0^{(\pm)}(\lambda)^*$ and
 $\mathcal F_0^{(\pm)}(\lambda)$ are bounded as well.
\end{proof}
 
 The functions $\mathcal F_0(\lambda)^*g$ with $g\in L^2(\R^{N-1})$ actually represent the totality of
 solutions of the homogeneous Helmholtz equation by Theorem~I.4.3~\cite{IsoKur_Introduction}. They 
 are the counterparts of Euclidean Hergoltz waves that are defined as the images of
 $L^2(S_\lambda)$-densities under the adjoint of the Fourier restriction operator $f\mapsto \hat
 f|_{S_\lambda}$, where $S_\lambda\subset\R^N$ denotes the sphere of radius $\lambda$.
 While hyperbolic Herglotz waves $\mathcal F_0(\lambda)^*g$ lie in $L^p(\Hy^N)$ for all $p>2$, the optimal
 $L^p$ decay rate of Euclidean Herglotz waves is given by the Stein-Tomas Theorem saying that $p\geq
 \frac{2(N+1)}{N-1}$. Better decay properties of the latter, namely pointwise decay like $|x|^{(1-N)/2}$ at
 infinity, can be obtained for densities of higher regularity via the method of
 stationary phase. Whether $L^p(\R^N)$ can be reached in the optimal range $p>\frac{2N}{N-1}$ only by
 assuming stronger integrability assumptions on the density, is a delicate question related to the
 Restriction Conjecture which  is still unsolved for $N\geq 3$. So we see that hyperbolic Herglotz waves
 have much better integrability properties than their Euclidean counterparts. This will allow us to adopt a 
 fixed point approach that is decidedly simpler than its Euclidean analogue~\cite{Gu,Man_uncountably} where
 Helmholtz equations of the form~\eqref{eq:NLH} can only be discussed for a restricted set of exponents $p$.

\section{Proof of Theorem~\ref{thm:NoStrichartz}} \label{sec:Nostrichartz}
 
Given the results of the previous section, the proof is quite simple. Let $g\in L^2(\R^{N-1})$ be nontrivial.
Then Corollary~\ref{cor:LAP} implies $\psi_0:=\mathcal F_0^{(+)}(\lambda)^*g \in L^r(\Hy^N)$ for all $r>2$
and $\psi(x,t):= e^{i\omega t}\psi_0(x)$ solves~\eqref{eq:Strichartzhyp}, but $\psi\notin
L^p(\R,L^q(\Hy^N))$, since $\psi$ is periodic in time. This proves the result. \qed

\section{Proof of Theorem~\ref{thm:existence_small_solutions}} \label{sec:SmallSolutions}
 
 In this section we prove Theorem~\ref{thm:existence_small_solutions} with the aid of the Contraction Mapping
 Principle and the estimates for the operators $\mathcal
 F_0(\lambda)^{(+)},\mathcal R_\lambda$ from Corollary~\ref{cor:LAP}. As in
 \cite{Man_uncountably} we use a smooth function $\chi\in C^\infty(\R)$ such that $\chi(z)=z$ for $|z|\leq
 \frac{1}{2}$, $\chi(z)=1$ for $|z|\geq 1$ and define, for any given $g\in L^2(\R^{N-1})$, the operator 
$$
  T_g(u):= \mathcal F_0^{(+)}(\lambda)^* g + \mathcal R_\lambda(f(\cdot,\chi(u))). 
$$ 
As mentioned in the introduction we use the assumption $|f(x,z)|+|z||f_z(x,z)|\leq C|z|^{q-1}$. The operator
$T_g$ is well-defined as a map from $L^s(\Hy^N)$ to $L^s(\Hy^N)$ provided we choose $s$ according to
$\max\{2,\frac{N(q-2-\delta)}{2},q-1-\delta\}<s<2(q-1-\delta)$ for some $\delta\in (0,q-2)$.
Indeed, under this assumption Corollary~\ref{cor:LAP} applies and we obtain 
\begin{align*}
  \|T_g(u)\|_{L^s(\Hy^N)} 
  &\leq \|\mathcal F_0^{(+)}(\lambda)^* g\|_{L^s(\Hy^N)} + \|\mathcal R_\lambda(f(\cdot,\chi(u)))\|_{L^s(\Hy^N)} \\
  &\leq C(\|g\|_{L^2(\R^{N-1})} + \|f(\cdot,\chi(u))\|_{L^{\frac{s}{q-1-\delta}}(\Hy^N)}) \\
  &\leq C(\|g\|_{L^2(\R^{N-1})} + \||\chi(u)|^{q-1}\|_{L^{\frac{s}{q-1-\delta}}(\Hy^N)}) \\
  &\leq C(\|g\|_{L^2(\R^{N-1})} + \||\chi(u)|^{q-1-\delta}\|_{L^{\frac{s}{q-1-\delta}}(\Hy^N)}) \\
  &\leq C(\|g\|_{L^2(\R^{N-1})} + \|u\|_{L^s(\Hy^N)}^{q-1-\delta}).  
\end{align*}
So we conclude that $T_g$ is a selfmap on any given sufficiently small ball in $L^s(\Hy^N)$ provided $g\in
L^2(\R^{N-1})$ is chosen small enough. Similarly, using $|f_z(x,z)|\leq C|z|^{q-2}$ for $|z|\leq 1$ we obtain
that $T_g$ is a contraction on small balls. Hence, by the Contraction Mapping Principle, for every given small
enough $g\in L^2(\R^{N-1})$ the operator $T_g$ has a unique fixed point in a small ball and thus a solution $u\in L^s(\Hy^N)$ 
of $Lu-\lambda^2 u = f(x,\chi(u))$. Elliptic $L^p$-estimates imply $u\in L^\infty(\Hy^N)\cap L^s(\Hy^N)$ and
using the mapping properties of $ \mathcal F_0^{(+)}(\lambda)^*,\mathcal R_\lambda$ from
Corollary~\ref{cor:LAP} iteratively, we actually find $u\in L^r(\Hy^N)$
for all $r\in (2,\infty]$. Applying global $L^p$-estimates from 
from Theorem~A~\cite{Tay_Lpestimates} we find $u\in W^{2,r}(\Hy^N)$ for all $r\in
(2,\infty)$. Moreover, choosing $g$ sufficiently small, we may choose the ball and hence the
$L^s(\Hy^N)$-norm of $u$ so small that $\|u\|_{L^\infty(\Hy^N)}\leq C\|u\|_{L^s(\Hy^N)} \leq \frac{1}{2}$
holds.
But then we have $\chi(u)=u$ and $u$ is the solution of the nonlinear Helmholtz equation~\eqref{eq:NLH}
we were looking for. We finally mention that different $g$ yield different solutions since $\mathcal
F_0^{(+)}(\lambda)^*$ is injective, see Corollary~I.4.6~\cite{IsoKur_Introduction}. \qed

\section{Proof of Theorem~\ref{thm:existence_largesolutions}}  \label{sec:LargeSolutions}

In this section we prove the existence of nontrivial solutions to 
$$
   Lu -\lambda^2  u = \Gamma |u|^{p-2}u\quad\text{in }\Hy^N
$$
under the assumptions of Theorem~\ref{thm:existence_largesolutions}
using dual variational methods. In the context of  the nonlinear Helmholtz equation, this approach was
introduced  by Evequoz and Weth  in order to treat the corresponding equation in $\R^N$ for
$N\geq 3$ \cite{EW} or $N=2$ \cite{Ev}. We show that many of their ideas carry over to the Euclidean setting.
It actually turns out that the main difficulty in their approach, namely the verification of the ''Nonvanishing Property'', is much simpler due to a variant
of the Stein-Kunze phenomenon, as we will show later. Given that the fundamental ideas are all present in the
literature, we keep the presentation short. Following~\cite{EW} the dual variational method works as follows.
Using that $\Gamma$ is nonnegative, we may set $v:=\Gamma^{1/p^\prime} |u|^{p-2}u\in L^{p^\prime}(\Hy^N)$ so
that the task is to find nontrivial solutions of \eqref{eq:NLH} by solving the integral equation
\begin{equation} \label{eq:dual_equation}
 \Gamma^{1/p} \mathcal R_\lambda (\Gamma^{1/p}v) = |v|^{p^\prime -2}v. 
\end{equation} 
Notice that the mapping properties of $\mathcal R_\lambda$ from Corollary~\ref{cor:LAP} ensure that this
equation makes sense for $v\in L^{p'}(\Hy^N)$ as long as $2<p<2^*$. Since  $\mathcal R_\lambda$ is
symmetric, solutions of \eqref{eq:dual_equation} are critical points of the functional $J\in
C^1(L^{p'}(\Hy^N),\R)$ defined by
\begin{equation} \label{eq:def_J}
  J(v) := \frac{1}{p^\prime} \int_{\Hy^N}  |v|^{p^\prime} \,dV 
  - \frac{1}{2} \int_{\Hy^N}   \Gamma^{1/p} v \mathcal R_\lambda( \Gamma^{1/p}  v)  \,dV.
\end{equation} 
In the proof of our statements (i),(ii),(iii), we will apply Critical Point Theory to prove the existence of
one respectively infinitely many nontrivial critical points of $J$. For the same reasons as in the
previous section, these solutions are actually strong solutions and belong to $W^{2,r}(\Hy^N)$ for all
$r\in (2,\infty)$. 

\medskip

\noindent\textbf{\textit{Proof of~(i):}} We only prove the claim for constant $\Gamma$.
Note that the proof in this special case requires the verification of the ''Nonvanishing property''
from~\cite{EW} so that the claim in the general case $\Gamma\geq \Gamma_0=\lim_{x\to\infty} \Gamma(x)>0$
follows precisely as in Theorem~4.3~\cite{Ev:OnThe}. So from now on we assume w.l.o.g. $\Gamma\equiv 1$ so
that $J(u)=J(u\circ\tau)$ for all hyperbolic tranlations $\tau$.

\medskip

As in Lemma~4.2~(i) \cite{EW} 
one finds that $J$ has the mountain pass geometry and that there is a Palais-Smale sequence $(v_n)$ in
$L^{p'}(\Hy^N)$ at its Mountain Pass level $c>0$ given by 
\begin{equation}\label{eq:MP_level}
  c = \inf_{\gamma \in P} \max_{t\in [0,1]}J(\gamma (t)) >0.
\end{equation}
In other words,
\begin{align}  \label{eq:PScondition}
\begin{aligned}
  |v_n|^{p'-2}v_n - G\ast v_n \to 0 \quad\text{in }L^p(\Hy^N),\\
  \frac{1}{p'} \int_{\Hy^N} |v_n|^{p'}\,dV - \frac{1}{2} \int_{\Hy^N} v_n (G\ast v_n)\,dV \to c.
  \end{aligned}
\end{align}
From this one infers that $(v_n)$ is bounded in $L^{p'}(\Hy^N)$ and 
\begin{equation}\label{eq:nonvanishing0}
  \left(\frac{1}{p'}-\frac{1}{2}\right) \int_{\Hy^N} v_n (G\ast v_n)\,dV \to c.
\end{equation}
Next we show that after some hyperbolic translations $(v_n)$ converges to some
nontrivial critical point $v\in L^{p'}(\Hy^N)$ of $J$. 

\medskip

The Stein-Kunze estimate from Lemma~4.1~\cite{AnkPi_WaveKG} yields  
\begin{align*}
  &\left(\frac{1}{p'}-\frac{1}{2}\right) \left| \int_{\Hy^N} v_n \left[ ( 1_{[\frac{1}{R},R]^c} G)\ast v_n
  \right] dV \right|
  \\
  & \leq \|v_n\|_{L^{p'}(\Hy^N)} \|(  1_{[\frac{1}{R},R]^c} G) \ast v_n\|_{L^p (\Hy^N)}  \\
  &\leq  C \|v_n\|_{L^{p'}(\Hy^N)}^2 \left(\int_0^{1/R} + \int_R^\infty 
  (\sinh r)^{N-1} (1+r) e^{-(N-1)r/2} |G(r)|^{p/2}\,dr\right)^{2/p}  \\
  &\leq \frac{c}{2}
\end{align*}
for all  $n\in\N$ provided $R$ is large enough. Here we used the boundedness of $(v_n)$ in
$L^{p'}(\Hy^N)$, the asymptotics of $G$ from~\eqref{eq:estimate_G} and $2<p<2^*$. So we have
$$
  \left(\frac{1}{p'}-\frac{1}{2}\right) \liminf_{n\in\N} \int_{\Hy^N} v_n (1_{[\frac{1}{R},R]}G\ast v_n)\,dV
  \geq \frac{c}{2}.
$$
 Next let $(Q_l)_{l\in\N}$ be a family of disjoint geodesic balls of radius $R$ 
 the centers of which have the geodesic distance $R/2$ and that
 cover the whole $\Hy^N$. Denoting by $2Q_l$ the ball with the same
 center but doubled radius we get for almost all $n$ 
\begin{align*}
  \frac{c}{4}
  &\leq \left(\frac{1}{p'}-\frac{1}{2}\right)   \int_{\Hy^N} v_n  \left[(1_{[\frac{1}{R},R]} G)\ast v_n
  \right] \,dV    \\
  & \leq C\sum_{l=1}^\infty
  \int_{Q_l} \left(\int_{1/R < d(x,y)<R} |G(d(x,y))||v_n (x)||v_n (y)| \,dV(y)\right)\,dV(x) \\
  &\leq C  \max_{1/R \leq d(x,y)\leq R} |G(d(x,y))|  \sum_{l=1}^\infty \int_{Q_l} \left(\int_{2Q_l}
  |v_n(x)||v_n(y)|\,dV(y) \right)\,dV(x) \\
   &\leq  C   \sum_{l=1}^\infty  \left(\int_{2Q_l} |v_n (x)|\,dV(x)\right)^2 \\
   &\leq  C \sum_{l=1}^\infty  \left(\int_{2Q_l} |v_n (x)|^{p'}\,dV(x)\right)^{2/p'} \\
   &\leq  C  
    \left( \sup_{m\in\N} \int_{2Q_m} |v_n (x)|^{p'}\,dV(x)\right)^{2/p'-1}  \cdot 
   \sum_{l=1}^\infty  \int_{2Q_l} |v_n (x)|^{p'}\,dV(x)  \\
   &\leq  C  
    \left( \sup_{m\in\N} \int_{2Q_m} |v_n (x)|^{p'}\,dV(x)\right)^{2/p'-1}  \cdot 
    \|v_n\|_{L^{p'}(\Hy^N)} \\
   &\leq  C  \left( \sup_{m\in\N} \int_{2Q_m} |v_n (x)|^{p'}\,dV(x)\right)^{2/p'-1}.    
\end{align*}
Here we used that the balls $2Q_m$ cover $\Hy^N$ only a finite number of times because $\Hy^N$ has bounded
geometry. 
The latter estimate implies that there are centers $x_n\in\Hy^N$ such that 
\begin{equation}\label{eq:nonvanishing}
  \liminf_{n\to\infty} \int_{B_{2R}(x_n)} |v_n|^{p'}\,dV  >0.
\end{equation}
Denoting by $\tau_{x_n}$ the hyperbolic translation with $\tau_{x_n} 0=x_n$, 
we obtain that $w_n(x):= v_n (\tau_{x_n} x)$ is another Palais-Smale sequence of $J$. Given that it is bounded
as well, it converges weakly to some $w\in L^{p'}(\Hy^N)$. Combining the first line of \eqref{eq:PScondition}
with  local $L^p$-estimates, we infer that $w_n$ is bounded in $W^{2,p}(B_{4R}(0))$ and hences converges in
$L^{p'}(B_{2R}(0))$ to its weak limit $w$ so that \eqref{eq:nonvanishing} implies $w\neq 0$. Moreover, one
checks that $w$ is a critical point of $J$ at the mountain pass level and the proof is finished. 
\qed

\medskip

\noindent\textbf{\textit{Proof of~(ii):}} Using the formula~\eqref{eq:identityRlambda} we may verify the
assumptions of the Symmetric Mountain Pass Theorem as in Lemma~3.2~\cite{MMP}. Indeed, for
every $m\in\N$, we can choose radially symmetric functions $\psi_1,\ldots,\psi_m\in C_0^\infty(\R_+)$ with
mutually disjoint supports contained in the exterior of the ball of radius $\lambda$. Then
$\{z_1,\ldots,z_m\}$ is a linearly independent set if we set 
$$
  z_j:= \max\{\Gamma^{-1/p},\delta\}\cdot \psi_j(P)h
$$ 
for some fixed $h\in C_0^\infty(\Hy^N)$ and sufficiently small $\delta>0$. Indeed, due
to~\eqref{eq:identityRlambda} these functions are even mutually orthogonal and we have
$J(tz_j)\to -\infty$ as $t\to\infty$ because of 
\begin{align*}
  J(tz_j)
  &= \frac{t^{p'}}{p'} \|z_j\|_{L^{p'}(\Hy^N)}^{p'} - \frac{t^2}{2} \int_{\Hy^N} \Gamma^{1/p}z_j \mathcal
  R_\lambda(\Gamma^{1/p}z_j)  \,dV \\
   &\leq\frac{t^{p'}}{p'}\|z_j\|_{L^{p'}(\Hy^N)}^{p'} - \frac{t^2}{4} \int_{\Hy^N} \psi_j(P)h \mathcal
   R_\lambda(\psi_j(P)h) \,dV \\  
  &\stackrel{\eqref{eq:identityRlambda}}{=} \frac{t^{p'}}{p'} \|z_j\|_{L^{p'}(\Hy^N)}^{p'} - \frac{t^2}{4}
   \text{p.v.} \int_\R \frac{\skp{\psi_j(P)h}{\frac{d}{ds}E_P(s)(\psi_j(P)h)}} {s^2-\lambda^2}\,ds \\
  &= \frac{t^{p'}}{p'} \|z_j\|_{L^{p'}(\Hy^N)}^{p'} - \frac{t^2}{4}
  \text{p.v.} \int_\R \frac{|\psi_j(s)|^2 }{s^2-\lambda^2} \skp{h}{\frac{d}{ds}E_P(s)h}\,ds  \\
   &= \frac{t^{p'}}{p'} \|z_j\|_{L^{p'}(\Hy^N)}^{p'} - \frac{t^2}{4}
   \int_{\supp(\psi_j)}  \frac{\psi_j(s)^2}{s^2-\lambda^2} \|\mathcal F_0^{(+)}(s)h\|_{L^2(\R^{N-1})}^2 \,ds, 
\end{align*}
which tends to $-\infty$ if $h$ is chosen suitably.
From $\Gamma(x)\to 0$ as $x\to\infty$ one deduces as in Lemma~5.2~\cite{EW} that the Palais-Smale
condition holds so that the existence of an unbounded sequence of solutions follows from the Symmetric
Mountain Pass Theorem, see Theorem~6.5~\cite{Str_Variational} for the Euclidean version.  \qed

\medskip

\noindent\textbf{\textit{Proof of (iii):}} We show that under the assumptions of part (iii) the
functional $J$ restricted to $L^{p'}_{rad}(\Hy^N)$ satisfies again the assumptions of the Symmetric Mountain
Pass Theorem. First, the functions $z_1,\ldots,z_m$ from above are radial if $h$ is radial, so it remains to verify
the Palais-Smale condition. A Palais-Smale sequence $(v_n)$ in $L^{p'}_{rad}(\Hy^N)$ is bounded and hence without loss of generality weakly
convergent to some $v\in L^{p'}_{rad}(\Hy^N)$. Since $L^{p'}_{rad}(\Hy^N)$ is a uniformly convex Banach space, the convergence
$v_n\to v$ in $L^{p'}_{rad}(\Hy^N)$ is proved once we show $\|v_n\|_{L^{p'}(\Hy^N)} \to
\|v\|_{L^{p'}(\Hy^N)}$, see Proposition 3.32~\cite{Bre_FuncAna}. In view of~\eqref{eq:PScondition} this holds
once we show that  $\psi_n:=G\ast v_n$ has a convergent subsequence in $L^p_{rad}(\Hy^N)$. This is checked as
follows. Corollary~\ref{cor:LAP} implies
that $\psi_n:= G\ast v_n = \mathcal R_\lambda v_n$ is bounded in $L^q_{rad}(\Hy^N)$ for all $2<q<
\frac{Np'}{(N-2p')_+}$ because $v_n$ is bounded in $L^{p'}_{rad}(\Hy^N)$. Then 
$L\psi_n + k^2 \psi_n = (k^2+\lambda)\psi_n +  v_n \in L^q_{rad}(\Hy^N)+L^{p'}_{rad}(\Hy^N)$ 
and the $L^p$-estimates from Theorem~A~\cite{Tay_Lpestimates}
show that $(\psi_n)$ is bounded in $W^{2,q}_{rad}(\Hy^N)+W^{2,p'}_{rad}(\Hy^N)$. By Theorem~2
in~\cite{HebVau_Sobolev} (see also Theorem~3.1~\cite{BhaSan_Poincare} for a more elementary proof of a related result), 
this space imbeds compactly into $L^p_{rad}(\Hy^N)$ if $q$ is chosen smaller than but sufficiently close to
$p\in (2,2^*)$.
So $(\psi_n)$ has a convergent subsequence in $L^p_{rad}(\Hy^N)$ and $J$ restricted to $L^{p'}_{rad}(\Hy^N)$
satisfies the Palais-Smale condition. As above, we obtain an unbounded sequence of critical points of $J$,
which finishes the proof. \qed

\section{Proof of Theorem~\ref{thm:radial}}   \label{sec:RadialSolutions}

For the proof we have to analyze the unique solution of the ODE initial value problem
\begin{equation}\label{eq:ODE}
  - u'' - \frac{f'(r)}{f(r)}u' - V(r) u = \Gamma(r)|u|^{p-2}u,\qquad  u(0)=\gamma,\; u'(0)=0
\end{equation}
where $\gamma$ will be assumed to be positive without loss of generality. The first step is to find suitable
bounds for $u,u',u''$. To this end we introduce the positive function
 \begin{equation} \label{eq:defn_Z}
  Z(r) := \frac{1}{2} u'(r)^2 + \frac{1}{2}V(r)u(r)^2 + \frac{1}{p}  \Gamma(r)|u(r)|^p.
 \end{equation}
 From $f'\geq 0$ and (H2),(H3) we get that there is an integrable function $m$ such that  
\begin{align} \label{eq_Zprime}
  \begin{aligned}
  Z'(r)
  &\stackrel{\eqref{eq:defn_Z}}{=} u'(r) \left( u''(r)+V(r)u(r)+\Gamma(r)|u(r)|^{p-2}u(r)\right) \\
  &\quad + \frac{1}{2} V'(r)u(r)^2 +
  \frac{1}{p}\Gamma'(r)|u(r)|^p  \\
  &\stackrel{\eqref{eq:ODE}}{=} - \frac{f'(r)}{f(r)}|u'(r)|^2
    + \frac{1}{2} V'(r)u(r)^2 + \frac{1}{p}\Gamma'(r)|u(r)|^p \\
  &\leq m(r) Z(r)  
  \end{aligned}
\end{align}
and thus
\begin{equation} \label{eq:estimateZ_0R}
  Z(r) 
  \leq Z(0) \exp\left(\int_0^\infty m(s)\,ds\right) 
  \qquad \text{for all }r>0.
\end{equation}
 Since $\Gamma$ is nonnegative and $V$ is positive we deduce that $u,u',u''$ exist globally.

\medskip

Next we show that $u$ has an unbounded sequence of zeros. Indeed, the function $v(r):=f(r)^{1/2} u(r)$
satisfies
\begin{align}\label{eqv}
  \begin{aligned}
  v^{\prime \prime}(r)&+c(r)v(r)=0,\qquad\text{where } \\ 
  c(r) &:= \Gamma(r)|u(r)|^{p-2}+ V(r) -  \frac{f''(r)}{2f(r)} + \frac{f'(r)^2}{4f(r)^2}  \\
   &= \Gamma(r)|u(r)|^{p-2}+ V(r) -  \dfrac{1}{2} \log(f)''(r) - \dfrac{1}{4}   (\log(f)'(r))^2 
  \end{aligned}
\end{align}
From $\Gamma\geq 0$, (H1), (H2) we deduce 
$$
  \liminf_{r\to\infty} c(r) \geq V_\infty - \frac{\kappa^2}{4}>0.
$$
Hence, the function $c$ is uniformly positive near infinity so that Sturm's oscillation theorem  
implies that $v$ and hence $u$ has an unbounded sequence of zeros.

 \medskip
 
 Next we prove the estimates~\eqref{notL2}. To this end we define
\begin{equation}\label{def:psi}
  \psi(r):= \frac{1}{2} v'(r)^2+ f(r)\left(\frac{1}{2} \tilde V(r)
  u(r)^2+\frac{1}{p}\Gamma(r)|u(r)|^p\right),
\end{equation}
where  $\tilde V(r):=V(r)-\frac{\kappa^2}{4}$. Differentiation yields  
\begin{align*} 
\psi'
&= v'v'' + f'\left(\frac{1}{2} \tilde V u^2+\frac{1}{p}\Gamma|u|^p \right) \\
&+ f\left( \frac{1}{2} \tilde V' u^2 +  \frac{1}{p}\Gamma'|u|^p  
   + ( \tilde V u+\Gamma|u|^{p-2}u)u' \right) \\
&\stackrel{\eqref{eqv}}{=} -cvv' +  f u' ( \tilde V u+\Gamma|u|^{p-2}u) \\
& + f'\left(\frac{1}{2} \tilde V u^2+\frac{1}{p}\Gamma|u|^p\right) 
 + f\left(  \frac{1}{2}   V' u^2 + \frac{1}{p}\Gamma'|u|^p \right) \\
&= -cvv' +  \left(f^{1/2} v' - \frac{1}{2}f'f^{-1/2}v\right)   ( \tilde V u+\Gamma|u|^{p-2}u ) \\
& + f'\left( \frac{1}{2} \tilde V u^2 +\frac{1}{p}\Gamma|u|^p\right) 
+ f\left( \frac{1}{2} V' u^2 + \frac{1}{p}\Gamma'|u|^p \right) \\
&= \left( -c + \tilde V +\Gamma|u|^{p-2} \right) vv' +
\big(\frac{1}{p}-\frac{1}{2}\big) f'\Gamma|u|^p + f\left( \frac{1}{2} V' u^2 + 
\frac{1}{p}\Gamma'|u|^p\right) \\
&= \left(  \frac{f''}{2f}  - \dfrac{(f')^2}{4f^2} - \frac{\kappa^2}{4}\right)  vv'  \\
& +\left( \big(\frac{1}{p}-\frac{1}{2}\big)\frac{f'}{f} \Gamma|u|^{p-2} + \frac{1}{2} V' +
\frac{1}{p}\Gamma'|u|^{p-2} \right)v^2.
\end{align*}
To prove the upper bounds in~\eqref{notL2} we prove an upper bound for $\psi$ as follows. From the previous
identity we get on the interval $[R,\infty)$ for large $R$
\begin{align*}
  \psi'
  &\leq \left|\frac{f''}{2f}  - \dfrac{(f')^2}{4f^2} - \frac{\kappa^2}{4}\right| \frac{v^2+(v')^2}{2} 
     + \left( \frac{|V'|}{\min_{[R,\infty)} \tilde V} + m\right) \left(  \frac{1}{2} \tilde V + \frac{1}{p}
     \Gamma|u|^{p-2} \right)v^2
     \\
  &\leq \frac{1}{\min\{1,\min_{[R,\infty)} \tilde V\}}  \left( \left|\frac{f''}{2f}  -
  \dfrac{(f')^2}{4f^2} - \frac{\kappa^2}{4}\right| + |V'|+ m\right)  \psi. 
\end{align*} 
Here we used $p>2,f'\geq 0,\Gamma\geq 0$ and $\min_{[R,\infty)} \tilde V>0$ for sufficiently large $R$ by
(H2). Since the prefactor is integrable over $[R,\infty)$ and $\psi$ is positive on this interval, we infer
from Gronwall's inequality that 
$$
  \psi(r)\leq C\psi(R) \qquad \text{for all }r\geq R
$$
where $C$ is independent of $\gamma$. Combining this inequality, the simple inequality
$$
  \psi(R) \leq 2\left(f(R)+\frac{f'(R)^2}{4f(R) \min_\R V}\right) Z(R)
$$ 
and \eqref{eq:estimateZ_0R} we get for some $A>0$ independent of $\gamma$ 
$$
  Z(r) + |\psi(r)|\leq A Z(0) = A(V(0)\gamma^2+\Gamma(0)|\gamma|^p) \qquad \text{for all }r\geq 0.
$$ 
This proves the upper estimate in~\eqref{notL2}. This upper estimate may be used in the proof of the lower
estimate. Indeed, $|u(r)|\leq C_\gamma (1+f(r))^{-1/2}$ implies
\begin{align*}
  \psi'
  &\geq - \left|\frac{f''}{2f}  - \dfrac{(f')^2}{4f^2} - \frac{\kappa^2}{4}\right| \frac{v^2+(v')^2}{2} 
     - \left( \|\Gamma\|_\infty C_\gamma^{p-2} f'f^{-p/2}  
     + \frac{|V'|}{\min_{[R,\infty)} \tilde V} + m\right)  \psi \\
  &\geq  - \frac{1}{\min\{1,\min_{[R,\infty)} \tilde V\}} \left( \left|\frac{f''}{2f}  -
  \dfrac{(f')^2}{4f^2} - \frac{\kappa^2}{4}\right| + 
   \|\Gamma\|_\infty C_\gamma^{p-2} f'f^{-p/2} + |V'| + m\right)  \psi. 
\end{align*}  
Again, the prefactor is integrable over $[R,\infty)$ and we obtain 
$$
  \psi(r)\geq c\psi(R) \qquad\text{for all }r\geq R
$$
where $c$ only depends on the $L^1$-norm of this prefactor. Since $u$ oscillates, we may choose $R$ such
that additionally $u(R)u'(R)=0$ holds. For such $R$ one has the simple inequality $\psi(R) \geq f(R)Z(R)$ so
that the differential inequality for $Z'$ finally yields a positive number $B>0$ independent of
$\gamma$ such that 
$$ 
  \psi(r)\geq B Z(0) = B(V(0)\gamma^2+\Gamma(0)|\gamma|^p) \qquad \text{for all }r\geq 0. 
$$ 
This finishes the proof of~\eqref{notL2}.    \qed

  \section*{Acknowledgements}
 The first author is supported by MIS F.4508.14 (FNRS), PDR T.1110.14F (FNRS). The second author acknowledges financial support by the Deutsche Forschungsgemeinschaft (DFG,
    German Research Foundation) through the Collaborative Research Center 1173.

\bibliographystyle{plain}
\bibliography{bibhelmholtz}

\begin{thebibliography}{10}

\bibitem{AgHoe_Asymptotic}
S.~Agmon and L.~H\"{o}rmander.
\newblock Asymptotic properties of solutions of differential equations with
  simple characteristics.
\newblock {\em J. Analyse Math.}, 30:1--38, 1976.

\bibitem{Agmon_Spectral}
Shmuel Agmon.
\newblock Spectral properties of {S}chr\"{o}dinger operators and scattering
  theory.
\newblock {\em Ann. Scuola Norm. Sup. Pisa Cl. Sci. (4)}, 2(2):151--218, 1975.

\bibitem{Agmon}
Shmuel Agmon.
\newblock A representation theorem for solutions of the {H}elmholtz equation
  and resolvent estimates for the {L}aplacian.
\newblock In {\em Analysis, et cetera}, pages 39--76. Academic Press, Boston,
  MA, 1990.

\bibitem{ADY}
Jean-Philippe Anker, Ewa Damek, and Chokri Yacoub.
\newblock Spherical analysis on harmonic {$AN$} groups.
\newblock {\em Ann. Scuola Norm. Sup. Pisa Cl. Sci. (4)}, 23(4):643--679
  (1997), 1996.

\bibitem{Anker}
Jean-Philippe Anker and Vittoria Pierfelice.
\newblock Nonlinear {S}chr\"odinger equation on real hyperbolic spaces.
\newblock {\em Ann. Inst. H. Poincar\'e Anal. Non Lin\'eaire},
  26(5):1853--1869, 2009.

\bibitem{AnkPi_WaveKG}
Jean-Philippe Anker and Vittoria Pierfelice.
\newblock Wave and {K}lein-{G}ordon equations on hyperbolic spaces.
\newblock {\em Anal. PDE}, 7(4):953--995, 2014.

\bibitem{AnPiVa_WaveEquation}
Jean-Philippe Anker, Vittoria Pierfelice, and Maria Vallarino.
\newblock The wave equation on hyperbolic spaces.
\newblock {\em J. Differential Equations}, 252(10):5613--5661, 2012.

\bibitem{APV15}
Jean-Philippe Anker, Vittoria Pierfelice, and Maria Vallarino.
\newblock The wave equation on {D}amek-{R}icci spaces.
\newblock {\em Ann. Mat. Pura Appl. (4)}, 194(3):731--758, 2015.

\bibitem{BanCarSta_Scattering}
Valeria Banica, R\'{e}mi Carles, and Gigliola Staffilani.
\newblock Scattering theory for radial nonlinear {S}chr\"{o}dinger equations on
  hyperbolic space.
\newblock {\em Geom. Funct. Anal.}, 18(2):367--399, 2008.

\bibitem{BaDu1}
Valeria Banica and Thomas Duyckaerts.
\newblock Weighted {S}trichartz estimates for radial {S}chr\"odinger equation
  on noncompact manifolds.
\newblock {\em Dyn. Partial Differ. Equ.}, 4(4):335--359, 2007.

\bibitem{BaDu}
Valeria Banica and Thomas Duyckaerts.
\newblock Global existence, scattering and blow-up for the focusing {NLS} on
  the hyperbolic space.
\newblock {\em Dyn. Partial Differ. Equ.}, 12(1):53--96, 2015.

\bibitem{BhaSan_Poincare}
Mousomi Bhakta and K.~Sandeep.
\newblock Poincar\'{e}-{S}obolev equations in the hyperbolic space.
\newblock {\em Calc. Var. Partial Differential Equations}, 44(1-2):247--269,
  2012.

\bibitem{BCM_FourthOrder}
Denis Bonheure, Jean-Baptiste Casteras, and Rainer Mandel.
\newblock On a fourth order nonlinear {H}elmholtz equation.
\newblock {\em Journal of the London Mathematical Society}, to appear.

\bibitem{Bre_FuncAna}
Haim Brezis.
\newblock {\em Functional analysis, {S}obolev spaces and partial differential
  equations}.
\newblock Universitext. Springer, New York, 2011.

\bibitem{BGH}
Nicolas Burq, Colin Guillarmou, and Andrew Hassell.
\newblock Strichartz estimates without loss on manifolds with hyperbolic
  trapped geodesics.
\newblock {\em Geom. Funct. Anal.}, 20(3):627--656, 2010.

\bibitem{Caz_SLSchroedinger}
Thierry Cazenave.
\newblock {\em Semilinear {S}chr\"{o}dinger equations}, volume~10 of {\em
  Courant Lecture Notes in Mathematics}.
\newblock New York University, Courant Institute of Mathematical Sciences, New
  York; American Mathematical Society, Providence, RI, 2003.

\bibitem{CH}
Xi~Chen and Andrew Hassell.
\newblock Resolvent and spectral measure on non-trapping asymptotically
  hyperbolic manifolds {II}: {S}pectral measure, restriction theorem, spectral
  multipliers.
\newblock {\em Ann. Inst. Fourier (Grenoble)}, 68(3):1011--1075, 2018.

\bibitem{Ev}
Gilles Ev\'equoz.
\newblock Existence and asymptotic behavior of standing waves of the nonlinear
  {H}elmholtz equation in the plane.
\newblock {\em Analysis (Berlin)}, 37(2):55--68, 2017.

\bibitem{Ev:OnThe}
Gilles Ev\'{e}quoz.
\newblock On the periodic and asymptotically periodic nonlinear {H}elmholtz
  equation.
\newblock {\em Nonlinear Anal.}, 152:88--101, 2017.

\bibitem{EW}
Gilles Evequoz and Tobias Weth.
\newblock Dual variational methods and nonvanishing for the nonlinear
  {H}elmholtz equation.
\newblock {\em Adv. Math.}, 280:690--728, 2015.

\bibitem{GanSan_Signchanging}
Debdip Ganguly and Sandeep Kunnath.
\newblock Sign changing solutions of the {B}rezis-{N}irenberg problem in the
  hyperbolic space.
\newblock {\em Calc. Var. Partial Differential Equations}, 50(1-2):69--91,
  2014.

\bibitem{GanSan_nondegenacy}
Debdip Ganguly and Kunnath Sandeep.
\newblock Nondegeneracy of positive solutions of semilinear elliptic problems
  in the hyperbolic space.
\newblock {\em Commun. Contemp. Math.}, 17(1):1450019, 13, 2015.

\bibitem{Gu}
Susana Guti\'errez.
\newblock Non trivial {$L^q$} solutions to the {G}inzburg-{L}andau equation.
\newblock {\em Math. Ann.}, 328(1-2):1--25, 2004.

\bibitem{HebVau_Sobolev}
Emmanuel Hebey and Michel Vaugon.
\newblock Sobolev spaces in the presence of symmetries.
\newblock {\em J. Math. Pures Appl. (9)}, 76(10):859--881, 1997.

\bibitem{HS}
Shanlin Huang and Christopher~D. Sogge.
\newblock Concerning {$L^p$} resolvent estimates for simply connected manifolds
  of constant curvature.
\newblock {\em J. Funct. Anal.}, 267(12):4635--4666, 2014.

\bibitem{IkebeSaito_LAP}
Teruo Ikebe and Yoshimi Saito.
\newblock Limiting absorption method and absolute continuity for the
  {S}chr\"{o}dinger operator.
\newblock {\em J. Math. Kyoto Univ.}, 12:513--542, 1972.

\bibitem{Io}
Alexandru~D. Ionescu and Gigliola Staffilani.
\newblock Semilinear {S}chr\"odinger flows on hyperbolic spaces: scattering
  {$H^1$}.
\newblock {\em Math. Ann.}, 345(1):133--158, 2009.

\bibitem{IsoKur_Introduction}
Hiroshi Isozaki and Yaroslav Kurylev.
\newblock {\em Introduction to spectral theory and inverse problem on
  asymptotically hyperbolic manifolds}, volume~32 of {\em MSJ Memoirs}.
\newblock Mathematical Society of Japan, Tokyo, 2014.

\bibitem{Kato_growth}
Tosio Kato.
\newblock Growth properties of solutions of the reduced wave equation with a
  variable coefficient.
\newblock {\em Comm. Pure Appl. Math.}, 12:403--425, 1959.

\bibitem{ManSan_OnASemilinear}
Gianni Mancini and Kunnath Sandeep.
\newblock On a semilinear elliptic equation in {$\Bbb H^n$}.
\newblock {\em Ann. Sc. Norm. Super. Pisa Cl. Sci. (5)}, 7(4):635--671, 2008.

\bibitem{Man_uncountably}
Rainer Mandel.
\newblock Uncountably many solutions for nonlinear helmholtz and curl-curl
  equations with general nonlinearities.
\newblock {\em arXiv:1811.08168}, 2018.

\bibitem{MMP}
Rainer Mandel, Eugenio Montefusco, and Benedetta Pellacci.
\newblock Oscillating solutions for nonlinear {H}elmholtz equations.
\newblock {\em Z. Angew. Math. Phys.}, 68(6):68:121, 2017.

\bibitem{MetTay_NLWaves}
Jason Metcalfe and Michael Taylor.
\newblock Nonlinear waves on 3{D} hyperbolic space.
\newblock {\em Trans. Amer. Math. Soc.}, 363(7):3489--3529, 2011.

\bibitem{Perry_LaplaceOperator}
Peter~A. Perry.
\newblock The {L}aplace operator on a hyperbolic manifold. {I}. {S}pectral and
  scattering theory.
\newblock {\em J. Funct. Anal.}, 75(1):161--187, 1987.

\bibitem{Pierf_WeightedStrichartz}
Vittoria Pierfelice.
\newblock Weighted {S}trichartz estimates for the {S}chr\"{o}dinger and wave
  equations on {D}amek-{R}icci spaces.
\newblock {\em Math. Z.}, 260(2):377--392, 2008.

\bibitem{Simon_HarmAna}
Barry Simon.
\newblock {\em Harmonic analysis}.
\newblock A Comprehensive Course in Analysis, Part 3. American Mathematical
  Society, Providence, RI, 2015.

\bibitem{Stri_Restrictions}
Robert~S. Strichartz.
\newblock Restrictions of {F}ourier transforms to quadratic surfaces and decay
  of solutions of wave equations.
\newblock {\em Duke Math. J.}, 44(3):705--714, 1977.

\bibitem{Str_Variational}
Michael Struwe.
\newblock {\em Variational methods}, volume~34 of {\em Results in Mathematics
  and Related Areas. 3rd Series. A Series of Modern Surveys in Mathematics}.
\newblock Springer-Verlag, Berlin, fourth edition, 2008.
\newblock Applications to nonlinear partial differential equations and
  Hamiltonian systems.

\bibitem{Tat_Strichartz}
Daniel Tataru.
\newblock Strichartz estimates in the hyperbolic space and global existence for
  the semilinear wave equation.
\newblock {\em Trans. Amer. Math. Soc.}, 353(2):795--807, 2001.

\bibitem{Tay_Lpestimates}
Michael~E. Taylor.
\newblock {$L^p$}-estimates on functions of the {L}aplace operator.
\newblock {\em Duke Math. J.}, 58(3):773--793, 1989.

\bibitem{Tay_PDEsII}
Michael~E. Taylor.
\newblock {\em Partial differential equations {II}. {Q}ualitative studies of
  linear equations}, volume 116 of {\em Applied Mathematical Sciences}.
\newblock Springer, New York, second edition, 2011.

\bibitem{Tom_A_restriction}
Peter~A. Tomas.
\newblock A restriction theorem for the {F}ourier transform.
\newblock {\em Bull. Amer. Math. Soc.}, 81:477--478, 1975.

\end{thebibliography}
\end{document}